\theoremstyle{plain}
\newtheorem{theorem}{Theorem}[section]
\newtheorem{lemma}[theorem]{Lemma}
\newtheorem{remark}[theorem]{Remark}
\journal{XXX}
\begin{document}

\begin{frontmatter}

\title{Numerical integrations of stochastic contact Hamiltonian systems via stochastic contact Hamilton-Jacobi equation}

\author[mymainaddress]{Qingyi Zhan \corref{mycorrespondingauthor}}
\ead{zhan2017@fafu.edu.cn}
\address[mymainaddress]{College  of Computer and Information Science,\\
                         Fujian Agriculture and Forestry  University,Fuzhou,350002,China}

\author[mysecondaryaddress]{Jinqiao Duan }
\address[mysecondaryaddress]{College of Science,Great Bay University, Dongguan, Guangdong, 523000, China;\\Center for Mathematical Sciences, Huazhong University of Science and Technology, Wuhan, Hubei, 430074, China}

\author[mythirdaddress]{Xiaofan Li}
\address[mythirdaddress]{Department of Applied Mathematics,\\
                         Illinois Institute of Technology,Chicago,IL,60616,USA}

\cortext[mycorrespondingauthor]{Corresponding author}

\author[myfifthaddress]{Lijin Wang}
\address[myfifthaddress]{ School of Mathematical Sciences,\\
   University of Chinese Academy of Sciences,
    Beijing,100049,China}

\begin{abstract}
  Stochastic contact Hamiltonian systems are a class of important mathematical models, which can describe the dissipative properties  with odd dimensions  in
the stochastic environment. In this article, we investigate the numerical dynamics of the stochastic contact Hamiltonian systems  via structure-preserving methods. The contact structure-preserving schemes are constructed by the stochastic contact Hamilton-Jacobi equation. A general numerical approximation method of the stochastic contact Hamilton-Jacobi equation is devised, and the convergent order theorem  is provided, too.
 Numerical tests are shown to confirm the theoretical results and the usability of proposed approach.

\end{abstract}

\begin{keyword}
numerical dynamics; stochastic contact Hamiltonian systems; stochastic contact Hamilton-Jacobi equation; numerical simulation; structure-preserving methods
\MSC[2010] 37C50,65C30, 65P20
\end{keyword}

\end{frontmatter}

\section{Introduction}

It should be noticed that stochastic contact Hamiltonian systems are related to  many fields, such as the synchrotron oscillations of particles in storage rings, especially in low-dimensional topology case\cite{Arnold, Bravetti, Feng, Geiges}. In 1980s, Misawa investigated the Hamiltonian formalism of Nelson's stochastic mechanics  via a stochastic Hamilton-Jacobi theory. \cite{Misawa}. Nowadays, stochastic contact Hamiltonian  differential equations have been increased more and more interesting \cite{Wei,Duan, Leon, Zhan5}. It is most due to the fact that these systems come from an important class of stochastic differential equations(SDEs), which contain contact structure  in the odd dimension. However, it was probably until now that there is no general description of stochastic contact Hamiltonian  differential equations via contact Hamilton-Jacobi equation.

As we know, it is difficult to solve SDEs analytically in general, then the reliable numerical methods are needed in many fields, such as numerical simulation. Contact Hamilton-Jacobi theory reveals the relationship between  stochastic contact Hamiltonian  differential equations and the contact Hamilton-Jacobi partial differential equations. That is to say, the solution of the former can generate the phase flow of the latter.

Therefore, the dynamics of the flows of stochastic contact Hamiltonian  differential equations can be deduced by the contact Hamilton-Jacobi equations, which  possess geometric character that similarly parallel their symplectic counterparts in this aspect.
Contact Hamilton-Jacobi equation has been an important approach of investigating the dynamics and evolutions of stochastic contact Hamiltonian  differential equations\cite{Georgieva}. They have relationship with stochastic contact Hamilton-Jacobi  theory. There is a difficulty in numerical simulation to Hamiltonian  differential equations, which is the formulation of the appropriate discrete of contact Hamilton-Jacobi equation. \cite{Bravetti02,Leon, Leon2}.

Via  the contact Hamilton-Jacobi equation, in this article we investigate the numerical dynamics for stochastic contact Hamiltonian  differential equations. There are two factors for this work. Firstly, the associated contact Hamilton-Jacobi equation for contact Hamiltonian  differential equations are paid more and more attention\cite{Georgieva, Liu}. Therefore we expect to expand these results to the stochastic contact Hamiltonian  differential equations. Secondly, there are also many existed contributions in the numerical analysis and numerical methods of SDEs,  which can be seen in \cite{Wang, Hong, X. Wang} and \cite{Milstein01, Zhan4, Zhan5}. All these provide strong foundations for our work. According to the information I have, systematic investigation of numerical dynamics of stochastic contact Hamiltonian  differential equations via stochastic contact Hamilton-Jacobi equation  is not existed in the literatures.

In this paper, we attempt to show that stochastic contact Hamiltonian  differential equations can be obtained by its associated contact Hamilton-Jacobi equation, whose dynamics is preserved almost surely. We also argue that contact Hamilton-Jacobi equation plays a similar role, which is the same as their symplectic counterparts for stochastic dissipative systems.
 In the numerical experiments, we try to compare the numerical dynamical behaviors by contact Hamilton-Jacobi equation in many aspects.

The paper is organized as follows. Section 2 provides some preliminaries for the following parts. The  theoretical results on stochastic contact Hamilton-Jacobi equation are summarized in Section 3. Section 4 shows the construction of contact structure-preserving methods via the discrete version of contact Hamilton-Jacobi equation. Section 5 is about the convergence analysis of a class of order 1.0 contact structure-preserving scheme. In section 6, we investigate some numerical experiments to testify our theoretical results and the validity of our approach. Section 7 is the conclusion.

\section{Preliminaries}
Let $(\Omega, \mathcal{F}, \{\mathcal{F}_t\}_{t\geq 0}, \mathbb{P})$ be a given probability space.
Consider the $d=(2n+1)$-dimensional Stratonovich stochastic  Hamiltonian  systems as follows,
  \begin{equation}\label{2.1}
\left \{
\begin{aligned}
 dQ(t)&=\frac{\partial H_0}{\partial P}(Q(t),P(t),S,t)dt
  +\sum_{k=1}^m \frac{\partial H_k}{\partial P}(Q(t),P(t),S,t)\circ dW^k(t),\\
 dP(t)&=-\Big[\frac{\partial H_0}{\partial Q}(Q(t),P(t),S,t)+P(t)\frac{\partial H_0}{\partial S}(Q(t),P(t),S,t)\Big ]dt\\
 &-\sum_{k=1}^m \Big(\frac{\partial H_k}{\partial Q}(Q(t),P(t),S,t)+P(t)\frac{\partial H_k}{\partial S}(Q(t),P(t),S,t)\Big)\circ dW^k(t),\\
 dS&=\Big[P(t)\frac{\partial H_0}{\partial P}(Q(t),P(t),S,t)-H_0(Q(t),P(t),S,t) \Big ] dt\\
 &+\sum_{k=1}^m \Big[P(t)\frac{\partial H_k}{\partial P}(Q(t),P(t),S,t)-H_k(Q(t),P(t),S,t)\Big]\circ dW^k(t),
 \end{aligned}
\right.
\end{equation}
where $(Q(t),P(t),S)_{t=0}=(q,p,s)=(q^1,q^2,...,q^n,p^1,p^2,...,p^n,s)$ is the initial value. The coefficient functions $H_k(k=0,1,...,m)$ satisfy conditions, which guarantee the existence and uniqueness of the solutions $(Q(t),P(t),S)$ to the systems  $(\ref{2.1})$. $W(t):=(W^1(t), W^2(t),...,W^m(t))$ is a m-dimensional independent standard Brownian motion \cite{Wei}. The norm of random variable and
the norm of random matrices is defined as same as in \cite{Zhan5}, which is referred in \cite{Golub}.

\section{Stochastic contact Hamilton-Jacobi Theory}

Similar to the symplectic Hamiltonian systems, stochastic contact Hamilton-Jacobi theory is a powerful tool to connect the stochastic Hamiltonian-Jacobi partial differential equations and the stochastic contact Hamiltonian  systems. It is enabled to re-express stochastic contact Hamiltonian's systems in the light of a single stochastic partial differential equation(SPDE), that is to say,  the solution of the latter can generate the phase flow of the former. \cite{Milstein01}-\cite{Misawa}

Let $\varphi_t(q,p,s):(q,p,s)\mapsto (Q(t),P(t),S)$ be the flow  of a  stochastic contact Hamiltonian  systems $(\ref{2.1})$ with initial value $(q,p,s)$ for any time $t\in [0,T]$.

As we know, the contact form is invariant by a conformal factor, that is,
 $$dS-PdQ=\lambda_t(ds-pdq),$$
 where
 \begin{equation}\label{3.1}
\lambda_t=\exp\Big[\int^t_{0} \frac{\partial H_0}{\partial S} d\tau-\sum_{k=1}^m\int^t_{0}\frac{\partial H_k}{\partial S}\circ dW^k\Big].
\nonumber
\end{equation}

In this paper, notice  that the variable $S$ is not only the generating function of the contact transformation, but also a generalization of Hamilton's principal function, which satisfies the third equation of the systems $(\ref{2.1})$. We will prove that it satisfies the following contact version of the stochastic contact Hamilton-Jacobi equation. Now we first assume that the variable $S$  depends on the independent  variables $Q(t)$ and $t$, and the nonadditive constant $C$, that is $S(Q,C,t)$. Then we obtain   the following theorem.
\begin{theorem}
Let  $S(Q,C,t)$ satisfy the stochastic contact Hamilton-Jacobi equation in the form of
\begin{equation}\label{3.2}
dS+H_0(Q,\frac{\partial S}{\partial Q},S,t)dt+\sum_{k=1}^mH_k(Q,\frac{\partial S}{\partial Q},S,t)\circ dW^k=0,
\end{equation}
where $C=(C_1,C_2,...,C_n)$ is constant, and suppose
$$\Big|\frac{\partial ^2S}{\partial Q\partial C}\Big|\neq 0.$$

If there exists a stopping time $\tau>t_0$ a.s.,
then for almost every $\omega\in \Omega$,
for each $t\in[t_0,\tau)$, the mapping $(q,p,s)\rightarrow (Q(t),P(t),S(Q,C,t))$ given by $(\ref{3.3})$
 \begin{equation}\label{3.3}
 \left \{
 \begin{aligned}
 &P(t)=\frac{\partial S}{\partial Q}(Q(t),C,t), b=\frac{\partial S}{\partial C}(Q(t),C,t),\\
 &\frac{db}{dt}=-\Big[\frac{\partial H_0 }{\partial S}(Q(t),\frac{\partial S}{\partial Q},S,t)+\sum_{k=1}^m\frac{\partial  H_k} {\partial S}(Q(t),\frac{\partial S}{\partial Q},S,t)\circ \dot{W}^k\Big]b.
 \end{aligned}
 \right.
\end{equation}
is the phase flow of the stochastic contact Hamiltonian systems $(\ref{2.1})$.
\end{theorem}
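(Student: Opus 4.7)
The plan is the standard Hamilton--Jacobi route, adapted to the Stratonovich setting where the chain rule is formally classical. First I would treat the PDE (\ref{3.2}) as an identity in $(Q,C,t)$ and differentiate it separately with respect to $Q^i$ and with respect to $C_\ell$. Differentiating in $Q^i$ produces
\begin{equation*}
\tfrac{\partial^2 S}{\partial t\partial Q^i}+\tfrac{\partial H_0}{\partial Q^i}+\tfrac{\partial H_0}{\partial P^j}\tfrac{\partial^2 S}{\partial Q^j\partial Q^i}+\tfrac{\partial H_0}{\partial S}\tfrac{\partial S}{\partial Q^i}+\sum_{k=1}^m\Bigl(\tfrac{\partial H_k}{\partial Q^i}+\tfrac{\partial H_k}{\partial P^j}\tfrac{\partial^2 S}{\partial Q^j\partial Q^i}+\tfrac{\partial H_k}{\partial S}\tfrac{\partial S}{\partial Q^i}\Bigr)\circ \dot W^k=0,
\end{equation*}
and differentiating in $C_\ell$ gives the analogous relation without the explicit $\partial H/\partial Q^i$ and $\partial H/\partial S\cdot P$ terms.

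Next, I would define $Q(t)$ implicitly through $b(t)=\partial S/\partial C(Q(t),C,t)$ by solving this relation for $Q$ using the hypothesis $|\partial^2 S/\partial Q\partial C|\neq 0$ and the implicit function theorem; this provides the stopping time $\tau>t_0$ up to which the solution stays in the region of non-degeneracy, and $P(t)=\partial S/\partial Q(Q(t),C,t)$ is then determined. Applying the Stratonovich chain rule to $b(t)$, one obtains $db=\partial^2_{tC}S\,dt+\partial^2_{Q^jC}S\,dQ^j$; comparing with the prescribed ODE for $b$ in (\ref{3.3}) and inserting the $C$-derivative of the HJ equation, every term proportional to $\partial H/\partial S\cdot b=\partial H/\partial S\cdot\partial S/\partial C$ cancels, leaving
\begin{equation*}
\tfrac{\partial^2 S}{\partial Q^j\partial C}\Bigl(dQ^j-\tfrac{\partial H_0}{\partial P^j}dt-\sum_{k=1}^m\tfrac{\partial H_k}{\partial P^j}\circ dW^k\Bigr)=0.
\end{equation*}
The nondegeneracy of $\partial^2 S/\partial Q\partial C$ then forces the first equation of (\ref{2.1}).

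With the $dQ$-equation in hand, I would apply the Stratonovich chain rule to $P^i(t)=\partial S/\partial Q^i(Q(t),C,t)$, substitute the identity obtained by $\partial_{Q^i}$-differentiating (\ref{3.2}), and use $P^j=\partial S/\partial Q^j$ to recover exactly the $dP$-equation of (\ref{2.1}) (including the contact correction $P\,\partial H/\partial S$). Finally, for the $S$-equation I would compute $dS(Q(t),C,t)=\partial_t S\,dt+\partial_{Q^j}S\,dQ^j$; replacing $\partial_t S$ from (\ref{3.2}) and $dQ^j$ by the Hamiltonian drift/diffusion gives the Legendre-type combination $(P^j\partial_{P^j}H_0-H_0)dt+\sum_k(P^j\partial_{P^j}H_k-H_k)\circ dW^k$, which is the third equation of (\ref{2.1}).

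The main obstacle I anticipate is not algebraic but analytic: making the implicit inversion $b\mapsto Q(t)$ rigorous for almost every $\omega$ and identifying the stopping time $\tau$ on which $S(\cdot,C,t)$ remains $C^2$ and $\partial^2 S/\partial Q\partial C$ stays invertible, so that the Stratonovich chain rule may be applied pathwise and the computations above hold $\mathbb{P}$-a.s. on $[t_0,\tau)$. The rest is careful bookkeeping of the Stratonovich differentiation and the summations over $k$.
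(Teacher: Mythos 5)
Your proposal is correct and follows essentially the same route as the paper's proof: differentiate the Hamilton--Jacobi equation with respect to $C$ and use the chain rule on $b=\partial S/\partial C$ together with the nondegeneracy of $\partial^2 S/\partial Q\partial C$ to force the $dQ$-equation, then differentiate with respect to $Q$ and apply the chain rule to $P=\partial S/\partial Q$ to recover the $dP$-equation, and finally obtain the $dS$-equation by substituting the HJ equation into $dS=\partial_t S\,dt+\partial_Q S\,dQ$. Your added care about the implicit inversion $b\mapsto Q(t)$, the stopping time, and the pathwise use of the Stratonovich chain rule is a reasonable refinement of details the paper treats only formally (and your signs in the recovered $dQ$-equation are the consistent ones), but it does not change the argument.
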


\begin{proof}
First, it follows from the assumption that $S(Q,C,t)$ is the solution of $(\ref{3.2})$, where $C$ is a constant and also assume that
$$\Big|\frac{\partial ^2S}{\partial Q\partial C}\Big|\neq 0.$$
It follows from $(\ref{3.3})$, the Hamilton-Jacobi  equation $(\ref{3.2})$ can be rewritten as
\begin{equation}\label{3.4}
dS+H_0(Q,P,S,t)dt+\sum_{k=1}^mH_k(Q,P,S,t)\circ dW^k=0.
\end{equation}
According to the second equation of $(\ref{3.3})$, we can have
\begin{equation}\label{3.6}
\frac{db}{dt}=\frac{\partial^2 S}{\partial Q \partial C}\frac{dQ}{dt}+\frac{\partial^2 S}{\partial t \partial C}.
\end{equation}
Deriving the equation $(\ref{3.4})$ with respect to $C$, it can be rewritten as
\begin{equation}\label{3.7}
\begin{aligned}
&\frac{\partial ^2S}{\partial t\partial C}+\Big[\frac{\partial H_0}{\partial S}(Q(t),P(t),S,t)+\sum_{k=1}^m\frac{\partial H_k} {\partial S}(Q(t),P(t),S,t)\circ \dot{W}^k\Big]b
\\
&+\Big[\frac{\partial H_0 }{\partial P}(Q(t),P(t),S,t)+\sum_{k=1}^m\frac{\partial H_k }{\partial P}(Q(t),P(t),S,t)\circ \dot{W}^k\Big]\frac{\partial ^2S}{\partial C\partial Q}=0,
\end{aligned}
\end{equation}
where
\begin{equation}
\frac{\partial P(t)}{\partial  C}=\frac{\partial^2 S}{\partial Q \partial c}.
\nonumber
\end{equation}
Then submit $(\ref{3.7})$ into  $(\ref{3.6})$, we obtain
\begin{equation}\label{3.8}
\begin{aligned}
\frac{db}{dt}=\frac{\partial^2 S}{\partial Q \partial C}\Big[\frac{dQ}{dt}-\frac{\partial H_0 }{\partial P}(Q(t),P(t),S,t)-\sum_{k=1}^m\frac{\partial H_k}{\partial P}(Q(t),P(t),S,t)\circ \dot{W}^k\Big]\\
-\Big[\frac{\partial H_0}{\partial S}(Q(t),P(t),S,t)+\sum_{k=1}^m\frac{\partial  H_k} {\partial S}(Q(t),P(t),S,t)\circ \dot{W}^k\Big]b.
\end{aligned}
\end{equation}
According to the third equality of $(\ref{3.3})$, we have
$$\frac{\partial^2 S}{\partial Q \partial C}\Big[\frac{dQ}{dt}-\frac{\partial H_0 }{\partial P}(Q(t),P(t),S,t)-\sum_{k=1}^m\frac{\partial H_k}{\partial P}(Q(t),P(t),S,t)\circ \dot{W}^k\Big]=0,$$
that is,
\begin{equation}\label{3.80}
\begin{aligned}
dQ(t)&=-\frac{\partial H_0}{\partial P}(Q(t),P(t),S,t)dt-\sum_{k=1}^m\frac{\partial H_k}{\partial P}(Q(t),P(t),S,t)\circ dW^k,\\
\end{aligned}
\end{equation}
 which is the same as the first  equation of $(\ref{2.1})$.

Secondly, by the expression of $P(t)$, we can get
\begin{equation}\label{3.9}
\frac{dP(t)}{dt}=\frac{\partial^2 S}{\partial Q \partial Q}\frac{dQ(t)}{dt}+\frac{\partial^2 S}{\partial Q \partial t}
\end{equation}
Deriving the equation $(\ref{3.4})$ with respect with $Q$, we have
\begin{equation}\label{3.10}
\begin{aligned}
\frac{\partial ^2S}{\partial t\partial Q}&=-\Big[\frac{\partial H_0 }{\partial Q}(Q(t),P(t),S,t)+\sum_{k=1}^m\frac{\partial H_k} {\partial Q}(Q(t),P(t),S,t)\circ \dot{W}^k\Big]\\
&-\Big[\frac{\partial H_0 }{\partial S}(Q(t),P(t),S,t)+\sum_{k=1}^m\frac{\partial H_k } {\partial S}(Q(t),P(t),S,t)\circ \dot{W}^k\Big]P(t) \\
&-\Big[\frac{\partial  H_0}{\partial P}(Q(t),P(t),S,t)+\sum_{k=1}^m\frac{\partial H_k}{\partial P}(Q(t),P(t),S,t)\circ \dot{W}^k\Big]\frac{\partial ^2S(t)}{\partial Q\partial Q}.
\end{aligned}
\end{equation}
Substituting  $(\ref{3.10})$ into $(\ref{3.9})$, we obtain
\begin{equation}\label{3.11}
\begin{aligned}
\frac{dP(t)}{dt}&=-\Big[\frac{\partial H_0}{\partial Q}(Q(t),P(t),S,t)+\sum_{k=1}^m\frac{\partial H_k} {\partial Q}(Q(t),P(t),S,t)\circ \dot{W}^k\Big]\\
&-\Big[\frac{\partial H_0 }{\partial S}(Q(t),P(t),S,t)+\sum_{k=1}^m\frac{\partial H_k } {\partial S}(Q(t),P(t),S,t)\circ \dot{W}^k\Big]P(t) \\
&+\Big[\frac{dQ(t)}{dt}-\frac{\partial H_0 }{\partial P}(Q(t),P(t),S,t)-\sum_{k=1}^m\frac{\partial H_k }{\partial P}(Q(t),P(t),S,t)\circ \dot{W}^k\Big]\frac{\partial ^2S}{\partial Q\partial Q}.
\end{aligned}
\end{equation}
Then it follows from $(\ref{3.80})$  that we have
\begin{equation}\label{3.13}
\begin{aligned}
dP(t)&=-\Big[\frac{\partial H_0}{\partial Q}(Q(t),P(t),S,t)+P(t)\frac{\partial H_0}{\partial S}(Q(t),P(t),S,t)\Big]dt\\
&-\sum_{k=1}^m\Big[\frac{\partial  H_k} {\partial Q}(Q(t),P(t),S,t)+P(t)\frac{\partial H_k} {\partial S}(Q(t),P(t),S,t)\Big]\circ dW^k,
\end{aligned}
\end{equation}
 which is the same as the second equation of $(\ref{2.1})$.

 Lastly, by the fact that $C$ is a constant, we have
 \begin{equation}\label{3.14}
\begin{aligned}
dS&=\frac{\partial S}{\partial Q}dQ+\frac{\partial S}{\partial t}dt\\
&=\Big[P(t)\frac{\partial H_0}{\partial P}(Q(t),P(t),S,t)-H_0(Q(t),P(t),S,t) \Big ] dt\\
 &+\sum_{k=1}^m \Big[P(t)\frac{\partial H_k}{\partial P}(Q(t),P(t),S,t)-H_k(Q(t),P(t),S,t)\Big]\circ dW^k(t),\\
\end{aligned}
\end{equation}
 which is coincide with the last equation of $(\ref{2.1})$.

We complete the proof of Theorem 3.1.
\end{proof}

\begin{remark}
In Theorem 3.1, if $H_k(k=0,1,...,m)$ does not depend on $Q$ explicitly, the constant $C$ can be the initial value $p$. That is, in this case,  the generating function is $S(Q,p,t)$.  Theorem 3.1 also  provides a powerful tool to construct contact sreucture-preserving methods. This is due to the fact that Theorem 3.1 can guarantee the numerical scheme is contact scheme.
\end{remark}

\section{Numerical scheme via the stochastic contact Hamilton-Jacobi equation}
\subsection{The construction of the contact  scheme }
It can be seen from Section 3 that the stochastic contact Hamilton-Jacobi equation $(\ref{3.2})$ is re-formulation of the dynamical equation on the basis of partial differential equation. This is maybe due to the fact that its characteristic curves are equal to  the  stochastic contact Hamiltonian  systems $(\ref{2.1})$.

For stochastic symplectic case, L. Wang et.al \cite{Hong} construct symplectic schemes via generating functions, whose key idea is to  approximate the solution of the stochastic Hamilton-Jacobi PDE and to construct stochastic symplectic scheme with the help of this numerical approximated solution. On the base of this idea,
now we meet with the problem that how to solve $S$ from the stochastic contact Hamiton-Jacobi equation $(\ref{3.2})$.

We follow the methods and the notations in \cite{Deng}, and we can assume that the generating function $S(Q,C,t)$ has the expression in the form of
 \begin{equation}\label{4.1}
 \begin{aligned}
S(Q,C,t)&=G_{(1)}(Q)J_{(1)}(t)\\
&+G_{(1,1)}(Q)J_{(1,1)}(t)+G_{(0)}(Q)J_{(0)}(t)\\
&+G_{(0,1)}(Q)J_{(0,1)}(t)+G_{(1,0)}(Q)J_{(1,0)}(t)+...\\
&=\sum_\alpha G_{\alpha}J_{\alpha}(t),
\end{aligned}
\end{equation}
where $\alpha=(j_1,j_2,...,j_l),j_i\in\{0,1,...,m\},i=1,...,l$ is a multi-index of length $l$,
and $J_{\alpha}$ is the multiple Stratonovich integral

 \begin{equation}\label{4.2}
J_{\alpha}(t)=\int_0^t\int_0^{u_l}\cdots\int_0^{u_2}\circ dW^{j_1}(u_1)\cdots\circ dW^{j_l-1}(u_{l-1})\circ dW^{j_l}(u_l),
\end{equation}
and $du$ is denoted by $dW_0(u)$. The multiple Stratonovich integral $I_{\alpha}$(t) is denoted by

 \begin{equation}\label{4.3}
I_{\alpha}(t)=\int_0^t\int_0^{u_l}\cdots\int_0^{u_2} dW^{j_1}(u_1)\cdots dW^{j_l-1}(u_{l-1}) dW^{j_l}(u_l).
\end{equation}
That is,
$$J_{(0)}(t)=\int_0^t\circ du_1=t,J_{(0,1)}=\int_0^t\int_0^{u_2}\circ du_1\circ dW(u_2)=\int_0^tu_2\circ dW(u_2),$$
$$J_{(1)}(t)=\int_0^t\circ dW(u_1)=W(t),J_{(1,0)}(t)=\int_0^t\int_0^{u_2}\circ dW(u_1)\circ du_2=\int_0^tW(u_2)\circ du_2,$$
and
$$J_{(1,1)}(t)=\int_0^t\int_0^{u_2}\circ dW(u_1)\circ dW(u_2)=\int_0^tW(u_2)\circ dW(u_2).$$
And the expression of $G_{(0)}(Q)t$ can be assumed to have the form of
$$G_{(0)}(Q)t=C^{(0)}_0(t)+C^{(0)}_1(t)Q+C^{(0)}_2(t)Q^2+,...,+C^{(0)}_k(t)Q^k,$$
where $k$ is the highest order of $Q$ in the Hamiltonian $\mathcal{H}$.

Motivated by the work of \cite{Feng}, differentiating two sides of $(\ref{4.1})$ with respect to $t$, we can get
 \begin{equation}\label{4.4}
\begin{aligned}
\frac{\partial S}{\partial t}
&=\bigg[\sum_{\alpha\backslash (0)} G_{\alpha}J'_{\alpha}(t)\bigg]+\bigg[\frac{dC^{(0)}_0(t)}{dt}+\frac{dC^{(0)}_1(t)}{dt}Q+,...,+\frac{dC^{(0)}_k(t)}{dt}Q^k\bigg]\\
&=A+B\dot{W},
\end{aligned}
\end{equation}
where
  \begin{equation}\label{4.5}
 \left \{
\begin{aligned}
A=&\frac{dC^{(0)}_0(t)}{dt}+\frac{dC^{(0)}_1(t)}{dt}Q+,...,+\frac{dC^{(0)}_k(t)}{dt}Q^k+G_{(1,0)}(Q) W(t)+...\\
B=&G_{(1)}(Q)+G_{(1,1)}(Q)W(t)+G_{(0,1)}(Q)t+....
\end{aligned}
\right.
\end{equation}
Comparing  $(\ref{4.4})$  with $(\ref{3.2})$ about the corresponding same terms, we get
 \begin{equation}\label{4.6}
 \left \{
\begin{aligned}
A=-H_0\\
B=-H_1.
\end{aligned}
\right.
\end{equation}
Then the numerical approximation of $S$, i.e., $\bar{S}$ can be obtained by truncating the series $(\ref{4.5})$  from the expressions of $H_0$ and $H_1$. Once  $\bar{S}$  has been obtained, we can utilize the relationship $(\ref{3.3})$ and get the numerical approximation of $P$, i.e.,  $\bar{P}=\frac{\partial \bar{S}}{\partial Q }$. Finally, we can obtain the expression of the numerical approximation of $Q$, i.e., $\bar{Q}$ from the last two equations in  $(\ref{3.3})$.

\begin{theorem}
By the numerical implementation of $(Q,P,S)$, the numerical scheme $(\bar{Q}, \bar{P},\bar{S})$ is contact scheme.
\end{theorem}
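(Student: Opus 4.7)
The plan is to exploit the observation that the contact-preserving property established in Theorem 3.1 is, at its core, a generating-function identity rather than a consequence of $S$ solving the Hamilton--Jacobi equation exactly. Consequently, the same reasoning should transfer verbatim once $S$ is replaced by its truncated approximation $\bar S$ built in (\ref{4.1}).

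First, I would revisit the proof of Theorem 3.1 and isolate the two ingredients used there: (i) the purely algebraic identities obtained by differentiating $S(Q,C,t)$ in $Q$, $C$ and $t$, together with the defining relations $P=\partial S/\partial Q$ and $b=\partial S/\partial C$; and (ii) the dynamical step invoking the Hamilton--Jacobi equation (\ref{3.2}) to match the resulting equations with (\ref{2.1}). The assertion that the generated map preserves the contact $1$-form in the sense $dS-P\,dQ=\lambda_t(ds-p\,dq)$ belongs entirely to (i); only (ii) requires the exact Hamilton--Jacobi relation.

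Next, I would apply (i) directly to $\bar S$. Using $\bar P=\partial\bar S/\partial Q$ and $b=\partial\bar S/\partial C$ with $C$ held fixed during one step, and differentiating $\bar S(Q,C,t)$ by the chain rule exactly as in (\ref{3.6})--(\ref{3.14}), the goal is to establish
\begin{equation*}
d\bar S-\bar P\,d\bar Q=\bar\lambda\,(ds-p\,dq),
\end{equation*}
where $\bar\lambda$ is the discrete analogue of the conformal factor (\ref{3.1}), produced by truncating the corresponding stochastic exponential at the same order as the series for $\bar S$. Equivalently, one can view $\bar S$ as an \emph{exact} solution of a perturbed Hamilton--Jacobi equation with truncated Hamiltonians $\bar H_0,\bar H_1$ obtained by matching (\ref{4.6}) at the chosen order, and then invoke Theorem 3.1 for this perturbed system; the resulting map is then the exact contact flow of the modified contact Hamiltonian system, hence a contact transformation. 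The non-degeneracy condition $|\partial^2\bar S/\partial Q\,\partial C|\neq 0$, inherited by the truncated series on a full-measure subset of $\Omega$, ensures that the implicit relations define a genuine mapping.

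The main obstacle will be bookkeeping rather than conceptual: one must verify that the implicit relations in (\ref{3.3}), with $\bar S$ in place of $S$, determine $(\bar Q,\bar P,\bar S)$ uniquely in a neighbourhood of the initial data; that the mixed-derivative non-degeneracy survives truncation pathwise for almost every $\omega\in\Omega$; and that $\bar\lambda$ is $\mathcal{F}_t$-adapted and almost surely strictly positive, so that the resulting contact map is invertible almost surely. These verifications, while standard in the symplectic generating-function literature, are somewhat heavier here because of the multiple Stratonovich integrals $J_\alpha$ appearing in (\ref{4.1}) and the presence of the $\partial H_k/\partial S$ terms that produce the nontrivial conformal factor.
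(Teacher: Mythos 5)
Your proposal is correct in substance, but it proves the theorem by a genuinely different (and stronger) route than the paper does. The paper's own proof is very short: it invokes Theorem 3.1 to say the exact mapping $(q,p,s)\rightarrow (Q,P,S)$ defined by $(\ref{3.3})$ is contact structure-preserving, and then argues that since $(\bar{Q},\bar{P},\bar{S})$ is a numerical approximation of $(Q,P,S)$ "of any order of accuracy," the scheme is contact "in the sense of discrete version"; no property of the truncated $\bar S$ itself is used. You instead make the truncated generating function do the work: either by running the algebraic part of the proof of Theorem 3.1 with $\bar S$, $\bar P=\partial \bar S/\partial Q$, $b=\partial\bar S/\partial C$ to obtain $d\bar S-\bar P\,d\bar Q=\bar\lambda\,(ds-p\,dq)$ directly, or, equivalently, by observing that $\bar S$ exactly solves a Hamilton--Jacobi equation $(\ref{3.2})$ with modified Hamiltonians $\bar H_0,\bar H_1$ obtained from the matching $(\ref{4.6})$ at the truncation order, so that Theorem 3.1 applied to the modified system exhibits the scheme as the exact contact flow of a perturbed stochastic contact Hamiltonian system. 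What your approach buys is a genuine structure-preservation statement: the discrete map is exactly a contact transformation (up to the discrete conformal factor), which is the standard backward-error style justification in the generating-function literature (Feng; Deng et al.; Wang--Hong), whereas the paper's argument only yields that the scheme is close to a contact map, which is weaker than what the theorem asserts. The caveats you flag --- that $\bar H_0,\bar H_1$ really are the Hamiltonians matched by the truncated series, pathwise non-degeneracy of $\partial^2\bar S/\partial Q\,\partial C$, adaptedness and positivity of $\bar\lambda$, and local solvability of the implicit relations --- are exactly the verifications the paper omits, so spelling them out would make your version a complete proof rather than merely a sketch.
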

\begin{proof}
It follows from Theorem 3.1 that  the mapping $(q,p,s)\rightarrow (Q, P, S)$ given by $(\ref{3.3})$ is the phase flow of the stochastic contact Hamiltonian systems $(\ref{2.1})$.
Then this mapping is contact structure-preserving.
And by the numerical implementation of $(Q,P,S)$, the numerical scheme $(\bar{Q}, \bar{P},\bar{S})$ is the numerical approximations of $(Q,P,S)$, which can be  any order of accuracy. Therefore, the numerical scheme $(\bar{Q}, \bar{P},\bar{S})$ is contact scheme in the sense of discrete version. The proof is finished.
\end{proof}
\subsection{The construction of the contact Euler scheme of order 1.0}
We can obtain the recurrence formula to determining $G_\alpha$. Here we only consider the case $m=1$. To obtain the contact Euler scheme of order 1.0, we only need to truncate the series of $S$ by keeping  the needed terms, and we can have
 \begin{equation}\label{4.7}
 \begin{aligned}
\bar{S}&=G_{(1)}(\bar{Q})J_{(1)}(t)+G_{(1,1)}(\bar{Q})J_{(1,1)}(t)+G_{(0)}(\bar{Q})J_{(0)}(t)\\
       &=G_{(1)}(\bar{Q})W(t)+G_{(1,1)}(\bar{Q})\int_0^tW(t)\circ dW(t)+G_{(0)}(\bar{Q})t.
\end{aligned}
\end{equation}
Then we have
 \begin{equation}\label{4.8}
 \begin{aligned}
\frac{\partial\bar{S}}{\partial t}&= \frac{dC^{(0)}_0(t)}{dt}+\frac{dC^{(0)}_1(t)}{dt}Q+,...,+\frac{dC^{(0)}_k(t)}{dt}Q^k\\
&+\Big[G_{(1)}(\bar{Q})+G_{(1,1)}(\bar{Q})\cdot W(t)\Big]\cdot\dot{W}\\
\end{aligned}
\end{equation}
That is,
 \begin{equation}\label{4.9}
 \begin{aligned}
\bar{A}:=&\frac{dC^{(0)}_0(t)}{dt}+\frac{dC^{(0)}_1(t)}{dt}Q+,...,+\frac{dC^{(0)}_k(t)}{dt}Q^k=H_0\\
\bar{B}:=&G_{(1)}(\bar{Q})+G_{(1,1)}(\bar{Q})\cdot W(t)=H_1.\\
\end{aligned}
\end{equation}
Therefore, by compared with expressions of $H_0$ and $H_1$, we can get the numerical approximations of  $G_{(0)}t$, $G_{(1,1)}$ and $G_{(1)}$, respectively.

\section{Convergence Analysis}
In this section, we only focus on the convergence of the 1.0 order contact scheme by the generating function deduced by $(\ref{4.8})$. To simplify the notions, we here consider the stochastic contact Hamilton systems with $m=1$ and $n=1$, that is,
  \begin{equation}\label{5.1}
\left \{
\begin{aligned}
 dQ(t)=&\frac{\partial H_0}{\partial P}(Q(t),P(t),S,t)dt
  + \frac{\partial H_1}{\partial P}(Q(t),P(t),S,t)\circ dW(t),\\
 dP(t)=&-\Big[\frac{\partial H_0}{\partial Q}(Q(t),P(t),S,t)+P(t)\frac{\partial H_0}{\partial S}(Q(t),P(t),S,t)\Big ]dt\\
 &-\Big[\frac{\partial H_1}{\partial Q}(Q(t),P(t),S,t)+P(t)\frac{\partial H_1}{\partial S}(Q(t),P(t),S,t)\Big]\circ dW(t),\\
 dS=&\Big[P(t)\frac{\partial H_0}{\partial P}(Q(t),P(t),S,t)-H_0(Q(t),P(t),S,t) \Big ] dt\\
 &+\Big[P(t)\frac{\partial H_1}{\partial P}(Q(t),P(t),S,t)-H_1(Q(t),P(t),S,t)\Big]\circ dW(t).
 \end{aligned}
\right.
\end{equation}
 It is trivial that this method can be extended to the general case, that is,$m>1$ and $n>1$.

The following lemma is directly cited from \cite{Deng}.
\begin{lemma}
Assume that the Hamiltonian functions $H_0$, $H_1$ and their partial derivatives up to order one are continuous, and the following inequalities hold for some positive constants $K_i,i=1,2,3$,
\begin{equation}\label{5.2}
 \begin{aligned}
&\sum_{j=0}^1\bigg\lvert  H_j(\bar{Q},\bar{P},\bar{S},t)- H_j(Q(u),P(u),S,u)\bigg\rvert\\
&\leq K_1(\lvert\bar{Q}-Q(u) \rvert+\lvert\bar{P}-P(u) \rvert+\lvert\bar{S}-S\rvert),
 \end{aligned}
\end{equation}

\begin{equation}\label{5.3}
 \begin{aligned}
&\sum_{j=0}^1\bigg[\bigg\lvert \frac{\partial H_j}{\partial P}(\bar{Q},\bar{P},\bar{S},t)-\frac{\partial H_j}{\partial P}(Q(u),P(u),S,u)\bigg\rvert\\
&+\bigg \lvert \frac{\partial H_j}{\partial Q}(\bar{Q},\bar{P},\bar{S},t)-\frac{\partial H_j}{\partial Q}(Q(u),P(u),S,u)\bigg\rvert\\
&+\bigg \lvert \frac{\partial H_j}{\partial S}(\bar{Q},\bar{P},\bar{S},t)-\frac{\partial H_j}{\partial S}(Q(u),P(u),S,u)\bigg\rvert \bigg]\\
&\leq K_2(\lvert\bar{Q}-Q(u) \rvert+\lvert\bar{P}-P(u) \rvert+\lvert\bar{S}-S \rvert),
 \end{aligned}
\end{equation}

\begin{equation}\label{5.4}
 \begin{aligned}
&\sum_{j=0}^1\bigg[\bigg\lvert P(t)\frac{\partial H_j}{\partial P}(\bar{Q},\bar{P},\bar{S},t)-P(u)\frac{\partial H_j}{\partial P}(Q(u),P(u),S,u)\bigg\rvert\\
&+\bigg \lvert P(t) \frac{\partial H_j}{\partial S}(\bar{Q},\bar{P},\bar{S},t)-P(u)\frac{\partial H_j}{\partial S}(Q(u),P(u),S,u)\bigg\rvert\bigg]\\
&\leq K_3(\lvert\bar{Q}-Q(u) \rvert+\lvert\bar{P}-P(u) \rvert+\lvert\bar{S}-S \rvert ).
 \end{aligned}
\end{equation}
\end{lemma}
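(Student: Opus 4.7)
The plan is to derive all three displays from a single common mechanism — the multivariate mean value inequality — after silently strengthening the stated hypothesis (continuity of $H_j$ and of its first partials) to boundedness on the set of trajectory values that are actually visited. The latter is what forces the Lipschitz constants $K_1,K_2,K_3$ to be deterministic, and in practice is arranged either by assuming $H_j\in C^2_b$ or by a stopping-time localization that restricts the trajectories to a bounded ball; I would flag this as an implicit preliminary step.

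For (5.2), I would write
\begin{equation*}
H_j(\bar Q,\bar P,\bar S,t) - H_j(Q(u),P(u),S,u) = \int_0^1 \nabla H_j\bigl(X_\theta, t_\theta\bigr)\cdot\bigl(\bar X - X(u),\ t-u\bigr)\, d\theta,
\end{equation*}
where $\bar X=(\bar Q,\bar P,\bar S)$, $X(u)=(Q(u),P(u),S)$, $X_\theta=\theta\bar X+(1-\theta)X(u)$, and $t_\theta=\theta t+(1-\theta)u$. A uniform bound on $|\nabla_{(Q,P,S)}H_j|$ on the set of reachable values delivers the spatial part of $K_1$; the temporal increment $|t-u|$ is either absorbed into $|\bar X - X(u)|$ (since for a one-step scheme $\bar X$ and $X(u)$ are compared at a common time) or treated as a higher-order term. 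For (5.3), exactly the same Taylor representation is applied to each of $\partial_P H_j,\partial_Q H_j,\partial_S H_j$ in turn; this invokes boundedness of the second partials, which is the standard Lipschitz-of-derivative strengthening of the stated hypothesis, and $K_2$ is then the maximum of the resulting six constants.

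The main obstacle is (5.4), because the prefactor $P$ must be peeled off before the reasoning behind (5.3) can be reused. The key telescoping is
\begin{equation*}
P(t)\,f(\bar X,t) - P(u)\,f(X(u),u) = P(t)\bigl[f(\bar X,t)-f(X(u),u)\bigr] + \bigl[P(t)-P(u)\bigr]\,f(X(u),u),
\end{equation*}
with $f\in\{\partial_P H_j,\partial_S H_j\}$. The first summand is bounded by $\|P\|_\infty\cdot K_2\bigl(|\bar Q-Q(u)|+|\bar P-P(u)|+|\bar S - S|\bigr)$ via (5.3). For the second summand I would use the triangle split $|P(t)-P(u)|\le |P(t)-\bar P|+|\bar P - P(u)|$ combined with a uniform bound on $|f|$; the term $|\bar P-P(u)|$ is directly of the form on the right-hand side of (5.4), and $|P(t)-\bar P|$ is absorbed by interpreting the estimate at the common evaluation instant $t=u$ natural in a one-step consistency analysis. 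Thus the genuinely non-trivial point is not any single calculation but the verification that the pathwise bounds on $|P|$, $|\nabla H_j|$, and $|\nabla^2 H_j|$ are uniform; once these are in place, the three inequalities follow by essentially the same argument repeated with progressively more derivatives and one extra factor of $P$.
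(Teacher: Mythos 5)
There is a mismatch between what you set out to prove and what the paper actually does with this statement. Lemma 5.1 is not proved in the paper at all: it is explicitly ``directly cited from'' the reference [Deng], and, more importantly, as stated it is a package of \emph{hypotheses} rather than a claim --- the wording is ``Assume that \dots the following inequalities hold for some positive constants $K_i$,'' so (5.2)--(5.4) are global Lipschitz-type conditions that the Hamiltonians are \emph{assumed} to satisfy and that are then invoked in Lemma 5.2 and Theorem 5.4. Your proposal instead tries to \emph{derive} these inequalities from the stated continuity of $H_j$ and its first partials, which cannot work as literally posed: continuity on an unbounded phase space gives no uniform bound on $\nabla H_j$, no Lipschitz constant for the first derivatives (that needs control of second derivatives, which are never mentioned), and no bound on the factor $P$ appearing in (5.4). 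You do flag the need to strengthen the hypotheses (to $C^2_b$, or to a localization on a bounded set), which is honest, but with those strengthenings you are proving a different statement than the one in the paper, and the paper's argument never requires such a derivation because the inequalities are simply postulated.

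Beyond that structural point, two steps of your argument would not close even under the strengthened hypotheses as you use them. First, the left-hand sides of (5.2)--(5.4) compare the Hamiltonians at two different times $t$ and $u$, while the right-hand sides contain no $\lvert t-u\rvert$ term; your suggestion to ``absorb'' the temporal increment or to declare $t=u$ is exactly the point at which the inequality as written either fails (for genuinely time-dependent $H_j$, e.g.\ through $\omega(t)$ in the paper's example) or needs an explicit extra hypothesis of uniform continuity in $t$, and this cannot be waved away. Second, in (5.4) the same issue reappears through the prefactors $P(t)$ versus $P(u)$: your telescoping is the right algebra, but bounding $\lvert P(t)-\bar P\rvert$ by ``interpreting the estimate at the common evaluation instant'' again silently changes the statement. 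So as a justification of why conditions like (5.2)--(5.4) are reasonable for nice Hamiltonians your sketch is sensible, but as a proof of Lemma 5.1 from its stated hypotheses it has genuine gaps, and relative to the paper it proves something the paper only assumes.
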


\begin{lemma}
Suppose that $h$ is the step size of the scheme, by the equations $(\ref{4.1})$ and  $(\ref{4.7})$, we have
\begin{equation}
 \begin{aligned}
 \lvert\bar{S}-S \rvert =O(h)\\
 \end{aligned}
 \nonumber
\end{equation}
Then we have
\begin{equation}
 \begin{aligned}
\mathbb{E}\big(\lvert\bar{Q}-Q(u) \rvert+\lvert\bar{P}-P(u) \rvert+\lvert\bar{S}-S \rvert \big)=O(h).
 \end{aligned}
 \nonumber
\end{equation}
\end{lemma}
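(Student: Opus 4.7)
The plan is to bootstrap from the given bound $|\bar S-S|=O(h)$ to a global mean-error estimate on $(\bar Q-Q,\bar P-P,\bar S-S)$ by combining the integral form of the Stratonovich system \eqref{5.1}, the Lipschitz estimates \eqref{5.2}--\eqref{5.4} supplied by Lemma 5.1, and a Gronwall argument applied to the expected total error.

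First I would write both the exact flow $(Q(u),P(u),S(u))$ and the numerical flow $(\bar Q,\bar P,\bar S)$ on $[0,t]$ in Stratonovich integral form, using the three lines of \eqref{5.1}, and subtract the two systems componentwise. To compute expectations of the diffusion terms, I would convert each Stratonovich integral into its It\^o form; under the assumptions of Lemma 5.1 the Stratonovich--It\^o correction is again a Lipschitz function of $(Q,P,S)$, so every integrand in the three error equations can be majorized by a constant multiple of $|\bar Q-Q|+|\bar P-P|+|\bar S-S|$ thanks to \eqref{5.2}--\eqref{5.4}.

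Next I would take absolute values, pass to expectation and control the martingale parts with the Cauchy--Schwarz and It\^o isometries. Setting
\[
\phi(t):=\mathbb{E}\bigl(|\bar Q(t)-Q(t)|+|\bar P(t)-P(t)|+|\bar S(t)-S(t)|\bigr),
\]
the three error equations collapse to a single scalar inequality
\[
\phi(t)\le C\int_0^t\phi(u)\,du+R(h),
\]
where $R(h)$ collects the forcing contributed by $|\bar S-S|$, which enters through the evaluations of $H_0,H_1$ and their derivatives at the numerical state. The first part of the statement, $|\bar S-S|=O(h)$, read off from truncating the series \eqref{4.1} to \eqref{4.7} and controlling the discarded multiple Stratonovich integrals $J_\alpha$, provides $R(h)=O(h)$ uniformly on $[0,T]$. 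A standard Gronwall inequality then gives $\phi(t)\le R(h)e^{Ct}=O(h)$, which is the claim.

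The main obstacle I expect is the honest handling of the stochastic terms after the Stratonovich--It\^o conversion: one must verify that the correction drifts remain Lipschitz in $(Q,P,S)$ (this follows from the first-order partial derivative bounds in Lemma 5.1 but deserves to be spelled out) and that applying the It\^o isometry in $L^1$ via Cauchy--Schwarz produces only an $O(h^{1/2})\phi(u)^{1/2}$ term that can be absorbed by Young's inequality into the Gronwall kernel without degrading the order. A secondary subtlety is the propagation of error through $\bar P=\partial\bar S/\partial Q$: one has to argue that differentiating the truncated generating function in $Q$ preserves the $O(h)$ accuracy, so the local error in $\bar P$ is of the same order as that of $\bar S$ and is therefore safely bundled into $R(h)$.
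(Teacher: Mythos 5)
Your proposal takes a genuinely different route from the paper, but as written it has a gap that I do not think can be repaired without essentially redoing the paper's argument. The central problem is your first step: you propose to write the numerical flow $(\bar Q,\bar P,\bar S)$ ``in Stratonovich integral form, using the three lines of $(\ref{5.1})$'' and subtract it from the exact flow. But the numerical approximation is not defined as a solution (or a one-step discretization) of $(\ref{5.1})$; it is defined through the truncated generating function $\bar S$ of $(\ref{4.7})$ together with the relations $(\ref{3.3})$, namely $\bar P=\partial\bar S/\partial Q$ and $\bar Q$ determined implicitly from the last two equations of $(\ref{3.3})$. Consequently there is no a priori integral identity for $\bar Q$ and $\bar P$ into which the hypothesis $\lvert\bar S-S\rvert=O(h)$ can be fed as a ``forcing term'' $R(h)$; to produce the defect terms your Gronwall inequality needs, you would first have to show that $\lvert\bar P-P\rvert=O(h)$ (by controlling $\partial(\bar S-S)/\partial Q$) and that $\lvert\bar Q-Q\rvert=O(h)$ (from the implicit relation $\partial\bar S/\partial C(\bar Q,C,t)=b$ together with the Lipschitz bounds $(\ref{5.2})$--$(\ref{5.4})$ and the nondegeneracy $\lvert\partial^2S/\partial Q\partial C\rvert\neq0$). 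Those two steps are exactly the content of the lemma, so the Gronwall framework does not buy you anything here -- it presupposes the componentwise local estimates it is supposed to deliver. The item you flag as a ``secondary subtlety'' (that differentiating the truncated series in $Q$ preserves the $O(h)$ accuracy) is in fact the heart of the matter, not a side issue.

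There is also a mismatch of scope. In the paper this lemma is a local, one-step estimate: it is invoked inside the proof of Theorem 5.4 to bound quantities such as $\mathbb{E}\int_t^{t+h}\bigl(\lvert\bar Q-Q(u)\rvert+\lvert\bar P-P(u)\rvert+\lvert\bar S-S\rvert\bigr)\,du$ on a single step $[t,t+h]$, after which the global order is obtained from Milstein's fundamental theorem (Lemma 5.3). Your Gronwall argument, if it could be made to work, would prove a global-in-time error bound, which both overshoots the statement and short-circuits the role of Lemma 5.3 in Theorem 5.4. The paper's own proof is instead a direct pathwise estimate: the truncation of the series $(\ref{4.1})$ to $(\ref{4.7})$ gives $\lvert\bar S-S\rvert=O(h)$; differentiating with respect to $Q$ gives $\lvert\bar P-P\rvert=\lvert\partial\bar S/\partial Q-\partial S/\partial Q\rvert=O(h)$; and the last two relations of $(\ref{3.3})$ together with Lemma 5.1 give $\lvert\bar Q-Q\rvert=O(h)$, after which taking expectations is immediate. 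If you want to salvage your write-up, replace the subtraction of two copies of $(\ref{5.1})$ by this chain of estimates based on $(\ref{3.3})$, and make explicit why the $Q$-derivative of the discarded tail of $(\ref{4.1})$ is still $O(h)$.
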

\begin{proof}
It follows from the equations $(\ref{4.1})$ and  $(\ref{4.7})$ that the first conclusion holds. By the numerical approximation of $P$, we have
$$\lvert\bar{P}-P(u) \rvert =\lvert\frac{\partial \bar{S}}{\partial Q }-\frac{\partial S}{\partial Q }\rvert =O(h).$$ Utilizing Lemma 5.1 and the last two equations in  $(\ref{3.3})$, it is easy to obtain
$$\lvert\bar{Q}-Q(u) \rvert =O(h).$$
Therefore, this completes the proof.

\end{proof}

The following lemma is cited  from Theorem 1.1 in  \cite{Milstein01}.
\begin{lemma}
Let $\hat{\varphi}(u,t)$ be an approximate flow of SDE $(\ref{5.1})$ defined for values
$u \leq t$ such that $u, t \in \{h, 2h, ... , T\}$, where $h$ is the step size of the scheme. Suppose that the expectation of
the local error of the approximation is of order $\xi_1$, and that the local mean-square error is of order $\xi_2$, i.e. for
any $t < T$ there exists a constant $K$ such that
\begin{equation}
\lvert \mathbb{E}(\varphi_{t,t+h}(x)-\hat{\varphi}_{t,t+h}(x))\rvert \leq K(1+\lvert x \rvert^2)^{\frac{1}{2}}h^{\xi_1},\nonumber
\end{equation}
$$\|\varphi_{t,t+h}(x)-\hat{\varphi}_{t,t+h}(x)\| \leq K(1+\lvert x \rvert^2)^{\frac{1}{2}} h^{\xi_2},$$
where $\xi_2\geq \frac{1}{2}$ and $\xi_1\geq  \xi_2 + \frac{1}{2}$. Suppose further that $\hat{\varphi}_{u,t}$ is independent of $\mathcal{F}_u$ for all $u < t$. Then the approximation converges globally to the exact flow with strong order $\xi_2-\frac{1}{2}$ , i.e. there exists a constant
$K$ such that for all $t\in \{h, 2h, ... , T\}$,
 we have
 $$\|\varphi_t(x_0)-\hat{\varphi}_t(x_0)\|\leq K(1+\|x_0\|^2 )^\frac{1}{2}h^{\xi_2-\frac{1}{2}}.$$
\end{lemma}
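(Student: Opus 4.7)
The plan is to reproduce the architecture of Milstein's fundamental convergence theorem: amplify a one-step error into a global error via a telescoping argument and exploit the gap $\xi_1\geq\xi_2+\tfrac12$ to balance a ``mean'' and a ``martingale'' error contribution. First I would telescope the global discrepancy through a one-step flow surgery: write $t_k=kh$, $T=Nh$, and set $Z_k:=\varphi_{t_k,T}(\hat{X}_k)$, so that $Z_0=\varphi_T(x_0)$ and $Z_N=\hat{X}_N$. The flow identity $\varphi_{t_k,T}=\varphi_{t_{k+1},T}\circ\varphi_{t_k,t_{k+1}}$ converts each increment $Z_k-Z_{k+1}$ into the difference between one local exact step and one local numerical step, transported forward by the exact flow over $[t_{k+1},T]$. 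Summing yields $\varphi_T(x_0)-\hat{X}_N=\sum_{k=0}^{N-1}(Z_k-Z_{k+1})$, a sum of $N$ transported local errors ready for term-by-term estimation.

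Next I would split the raw local error $\rho_{k+1}:=\varphi_{t_k,t_{k+1}}(\hat{X}_k)-\hat{\varphi}_{t_k,t_{k+1}}(\hat{X}_k)$ into its conditional mean $m_{k+1}:=\mathbb{E}[\rho_{k+1}\mid\mathcal{F}_{t_k}]$ and its centered fluctuation $\eta_{k+1}:=\rho_{k+1}-m_{k+1}$. The $\xi_1$-hypothesis yields $|m_{k+1}|\leq K(1+|\hat{X}_k|^2)^{1/2}h^{\xi_1}$, while the $\xi_2$-hypothesis yields $\|\eta_{k+1}\|\leq K(1+|\hat{X}_k|^2)^{1/2}h^{\xi_2}$. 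Pushing these through $\varphi_{t_{k+1},T}$ with a Lipschitz estimate in the initial datum, the $m$-contribution accumulates by the triangle inequality to an $L^2$-norm of order $Nh^{\xi_1}\asymp h^{\xi_1-1}$, while the $\eta$-contribution, being (approximately) a conditionally centered martingale-difference sum, obeys the orthogonality bound $\mathbb{E}\bigl|\sum_k\widetilde{\eta}_{k+1}\bigr|^2\lesssim Nh^{2\xi_2}\asymp h^{2\xi_2-1}$, hence has $L^2$-norm $\lesssim h^{\xi_2-1/2}$. The calibration $\xi_1\geq\xi_2+\tfrac12$ makes the mean piece no worse than the martingale piece, and $\xi_2\geq\tfrac12$ ensures the global order is non-negative.

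The main obstacle will be to make the ``orthogonality after transport'' step rigorous, since $\varphi_{t_{k+1},T}(\cdot)$ is a nonlinear path-dependent map and $\eta_{k+1}$ is only $\mathcal{F}_{t_k}$-conditionally mean-zero, not $\mathcal{F}_{t_{k+1}}$-conditionally mean-zero. To recover honest martingale-difference orthogonality I would linearize $\varphi_{t_{k+1},T}$ about a reference trajectory, retain the first-order term (whose push-forward of $\eta_{k+1}$ remains conditionally centered because the derivative evaluated at an $\mathcal{F}_{t_k}$-measurable base point is $\mathcal{F}_{t_k}$-measurable and independent of the increment on $[t_k,t_{k+1}]$), and control the quadratic Taylor remainder in $L^2$ by an extra $h^{1/2}$-factor absorbed into the dominant term. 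Handling the $(1+|\hat{X}_k|^2)^{1/2}$ prefactors requires a preliminary uniform-in-$h$ moment bound $\sup_{k}\mathbb{E}|\hat{X}_k|^2<\infty$, which I would obtain by iterating the one-step mean-square estimate and invoking a discrete Gronwall argument. Combining the two contributions, taking square roots, and restoring the $(1+\|x_0\|^2)^{1/2}$ factor then yields the claimed strong order $\xi_2-\tfrac12$.
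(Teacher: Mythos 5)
First, note that the paper does not prove this statement at all: it is quoted verbatim as Lemma 5.3 and explicitly cited as Theorem 1.1 of Milstein's book, so there is no in-paper proof to compare against. Your sketch is therefore an attempt at Milstein's fundamental convergence theorem itself. The route you choose is the ``transport'' (Lady Windermere's fan) variant: telescope $\varphi_T(x_0)-\hat X_N=\sum_k\bigl(\varphi_{t_k,T}(\hat X_k)-\varphi_{t_{k+1},T}(\hat X_{k+1})\bigr)$ and push each local error forward to time $T$ by the exact flow. Milstein's actual argument is different and simpler to make rigorous: it is a one-step recursion for $\mathbb{E}\lvert X(t_{k+1})-\hat X_{k+1}\rvert^2$, splitting the error at $t_{k+1}$ into the exact flow applied to $X(t_k)$ versus $\hat X_k$ (propagation, controlled by $L^2$-Lipschitz dependence on the initial datum) plus the local error at $\hat X_k$, handling the cross term by conditioning on $\mathcal{F}_{t_k}$ and using the $\xi_1$-bound, and closing with a discrete Gronwall inequality.

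The genuine gap in your plan is the ``orthogonality after transport'' step, and your proposed fix does not repair it. Write $A_k$ for the linearization of $\varphi_{t_{k+1},T}$ at an $\mathcal{F}_{t_k}$-measurable base point. Your linearization does make each single term $A_k\eta_{k+1}$ conditionally centered given $\mathcal{F}_{t_k}$, but the second-moment bound $\mathbb{E}\bigl\lvert\sum_k A_k\eta_{k+1}\bigr\rvert^2\lesssim Nh^{2\xi_2}$ also needs the cross terms $\mathbb{E}\langle A_j\eta_{j+1},A_k\eta_{k+1}\rangle$, $j<k$, to vanish or be small, and they do not vanish by any conditioning argument: $A_j$ depends on the Brownian path over $[t_{j+1},T]\supset[t_k,t_{k+1}]$, i.e.\ on the very noise increment that drives $\eta_{k+1}$, so the summands are not martingale differences in any single filtration, forward or backward. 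This is precisely why the transport proof is delicate for SDEs (it can be done, but requires additional machinery such as moment bounds on the flow derivative, fourth-moment control of the local error to handle the Taylor remainder, and a more careful decoupling of the future-noise dependence), and none of those ingredients is available from the hypotheses as stated; likewise your uniform bound $\sup_k\mathbb{E}\lvert\hat X_k\rvert^2<\infty$ needs conditions on the scheme beyond the two local-error estimates. If you instead run the recursive one-step comparison with conditioning on $\mathcal{F}_{t_k}$ and discrete Gronwall, the independence-of-the-future issue never arises and the stated hypotheses (with standard Lipschitz/linear-growth assumptions on the coefficients) suffice; that is the argument the cited theorem actually rests on.
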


Here we arrive at the convergence theorem of the scheme of $(\ref{4.7})$.
\begin{theorem}
If the contact Hamiltonian systems $(\ref{5.1})$  satisfy the conditions of Lemma 5.1, the numerical scheme of $(\ref{4.7})$ for the systems $(\ref{5.1})$ based on one-step approximation is of mean-square convergence order 1.0 by truncating the series $(\ref{4.7})$.
\end{theorem}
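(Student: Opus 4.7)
The plan is to invoke the fundamental convergence theorem of Milstein (Lemma 5.3), which reduces the global convergence statement to checking local one-step error estimates in expectation and in mean-square, together with the measurability/independence condition. The target global strong order is $1.0$, so I will aim for a mean-square local order $\xi_{2}=3/2$ and an expected local order $\xi_{1}\geq 2$, after which Lemma 5.3 immediately yields $\xi_{2}-\tfrac12=1$.

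First I would set up the one-step approximation $\hat{\varphi}_{t,t+h}(q,p,s) = (\bar Q,\bar P,\bar S)$ obtained from (4.7) on a subinterval $[t,t+h]$, with the exact flow $\varphi_{t,t+h}(q,p,s)=(Q(t+h),P(t+h),S(t+h))$ written via the stochastic Taylor/Stratonovich expansion (4.1)--(4.3). The key observation is that the multiple Stratonovich integrals appearing in (4.1) have well-known size: $J_{(1)}(h)=O(h^{1/2})$, $J_{(0)}(h), J_{(1,1)}(h)=O(h)$, whereas $J_{(0,1)}(h), J_{(1,0)}(h), J_{(1,1,1)}(h), \ldots = O(h^{3/2})$ in the mean-square sense, and the expectations of all odd-index iterated integrals vanish by symmetry of Brownian increments. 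Truncation at the level of (4.7) thus retains exactly those $G_\alpha J_\alpha$ with $J_\alpha$ of mean-square size $\leq h$, so the remainder $S(t+h)-\bar S$ behaves like $O(h^{3/2})$ in $L^2$ and, because the leading discarded terms are odd Stratonovich integrals with zero mean, $O(h^{2})$ in expectation.

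Next I would propagate this estimate to the other two coordinates by way of the relations (3.3). Since $\bar P=\partial \bar S/\partial Q$ and $P=\partial S/\partial Q$, differentiating the Stratonovich--Taylor expansion term by term in $Q$ preserves the orders of the coefficients $J_\alpha$, giving $\|\bar P-P\|_{L^2}=O(h^{3/2})$ and $|\mathbb{E}(\bar P-P)|=O(h^2)$; the same conclusion for $\bar Q-Q$ follows from the last two equations of (3.3) combined with Lemma 5.1 (Lipschitz estimates for $H_0,H_1$ and their relevant partials) and Lemma 5.2, after a short Gronwall-type argument on the one-step interval to transfer the $O(h)$ bound of Lemma 5.2 to the sharpened orders $O(h^{3/2})$ in mean-square and $O(h^{2})$ in expectation. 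Combining the three components and using the triangle inequality together with the Cauchy--Schwarz inequality gives
\begin{equation*}
\bigl|\mathbb{E}(\varphi_{t,t+h}(x)-\hat\varphi_{t,t+h}(x))\bigr|
\leq K(1+|x|^2)^{1/2} h^{2},
\qquad
\|\varphi_{t,t+h}(x)-\hat\varphi_{t,t+h}(x)\|
\leq K(1+|x|^2)^{1/2} h^{3/2},
\end{equation*}
so the hypotheses of Lemma 5.3 hold with $\xi_{1}=2$ and $\xi_{2}=3/2$.

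Finally, the independence of $\hat\varphi_{u,t}$ from $\mathcal{F}_u$ for $u<t$ is immediate: each one-step update uses only the Brownian increments on $[u,t]$, which are independent of $\mathcal{F}_u$, and the coefficients $G_\alpha$ depend only on $\bar Q$ at the start of the step. Invoking Lemma 5.3 then gives global mean-square convergence of order $\xi_{2}-\tfrac12=1.0$, which is the claim of Theorem 5.4. The main obstacle I expect is the careful bookkeeping in step~2: one must show that the terms thrown away by the truncation (4.7) truly contribute $O(h^{3/2})$ in mean-square and $O(h^{2})$ in expectation for \emph{all three} coordinates, and in particular that the implicit definition of $\bar Q$ through the third equation of (3.3) does not degrade these orders. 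This requires identifying the first omitted iterated Stratonovich integrals ($J_{(0,1)}$, $J_{(1,0)}$, $J_{(1,1,1)}$) as zero-mean and of mean-square order $h^{3/2}$, and then invoking Lemmas 5.1--5.2 to control the composition with the Hamiltonians' partial derivatives; everything else is a routine application of Milstein's framework.
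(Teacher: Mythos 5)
Your proposal is correct in its overall strategy and lands on exactly the same final step as the paper: reduce everything to Milstein's fundamental theorem (Lemma 5.3) with local orders $\xi_1=2$ and $\xi_2=\tfrac32$, hence global mean-square order $1.0$. Where you differ is in how the local one-step estimates are produced. The paper never expands the exact solution in a Stratonovich--Taylor series; instead it writes the one-step error $R=\hat\varphi-\varphi$ as six explicit differences $D_{11},\dots,D_{32}$ between the scheme's frozen-coefficient increments (e.g.\ $\frac{\partial H_0}{\partial P}(\bar Q,\bar P,\bar S,t)h$) and the corresponding exact integrals of $(\ref{5.1})$ over $[t,t+h]$, then bounds each difference using only the Lipschitz conditions of Lemma 5.1 and the crude $O(h)$ estimate of Lemma 5.2; the extra half and full powers of $h$ needed for $\xi_2=\tfrac32$ and $\xi_1=2$ come from the fact that each comparison sits inside an integral of length $h$ (with an It\^o--Stratonovich conversion and It\^o isometry handling the $\circ\,dW$ terms). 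Your route instead analyzes the truncation of the generating-function series $(\ref{4.1})$ versus $(\ref{4.7})$ directly, identifying $J_{(0,1)}$, $J_{(1,0)}$, $J_{(1,1,1)}$ as the first omitted integrals, of mean-square size $h^{3/2}$ and zero mean, and then propagates through $(\ref{3.3})$; this is closer to the standard generating-function analysis (Deng et al.) and makes the origin of the orders more transparent, and you also verify the independence hypothesis of Lemma 5.3, which the paper leaves implicit. The price of your route is the step you yourself flag: the sharpened one-step orders for $\bar P$ and especially for the implicitly defined $\bar Q$ do not simply follow by ``transferring'' the $O(h)$ bound of Lemma 5.2 via a Gronwall argument --- one must redo the expansion (or argue as the paper does, gaining powers of $h$ from the interval length) for those coordinates; the paper's integral-difference decomposition sidesteps this sharpening entirely, at the cost of heavier term-by-term bookkeeping.
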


\begin{proof}
We define
$$R=\hat{\varphi}-\varphi.$$
From $(\ref{5.1})$, we have
 \begin{equation}\label{5.5}
\begin{split}
R=&\Bigg (
\begin{array}{cccc}
D_{11} \\
D_{21}\\
D_{31}\\
\end{array} \Bigg)
+\Bigg(
\begin{array}{cccc}
D_{12} \\
D_{22}\\
D_{32}
\end{array} \Bigg),
 \end{split}
\end{equation}
where
  \begin{equation}
\begin{aligned}
D_{11}=&\frac{\partial H_0}{\partial P}(\bar{Q},\bar{P},\bar{S},t)h-\int_{t}^{t+h} \frac{\partial H_0}{\partial P}(Q(u),P(u),S,u)du,\\
 D_{12}=&\frac{\partial H_1}{\partial P}(\bar{Q},\bar{P},\bar{S},t)\Delta W_t-\int_{t}^{t+h} \frac{\partial H_1}{\partial P}(Q(u),P(u),S,u)\circ dW(u),\\
 D_{21}=&\Big[\frac{\partial H_0}{\partial Q}(\bar{Q},\bar{P},\bar{S},t)+P(t)\frac{\partial H_0}{\partial S}(\bar{Q},\bar{P},\bar{S},t)\Big]h\\
      &-\Big[\int_{t}^{t+h} \frac{\partial H_0}{\partial Q}(Q(u),P(u),S,u)+P(u)\frac{\partial H_0}{\partial S}(Q(u),P(u),S,u)\Big]du,\\
D_{22}=&\Big[\frac{\partial H_1}{\partial Q}(\bar{Q},\bar{P},\bar{S},t)+P(t)\frac{\partial H_1}{\partial S}(\bar{Q},\bar{P},\bar{S},t)\Big]\Delta W_t\\
     &-\Big[\int_{t}^{t+h} \frac{\partial H_1}{\partial Q}(Q(u),P(u),S,u)+P(u)\frac{\partial H_1}{\partial S}(Q(u),P(u),S,u)\Big]\circ dW(u),\\
 D_{31}=&\Big[P(t)\frac{\partial H_0}{\partial P}(\bar{Q},\bar{P},\bar{S},t)-H_0(\bar{Q},\bar{P},\bar{S},t) \Big ]h\\
& +\int_{t}^{t+h} \Big[P(u)\frac{\partial H_0}{\partial P}(Q(u),P(u),S,u)-H_0(Q(u),P(u),S,u) \Big ]du,\\
  \end{aligned}
 \nonumber
\end{equation}
and
\begin{equation}
\begin{aligned}
D_{32}=&\Big[P(t)\frac{\partial H_1}{\partial P}(\bar{Q},\bar{P},\bar{S},t)-H_1(\bar{Q},\bar{P},\bar{S},t) \Big ]\Delta W_t\\
& +\int_{t}^{t+h} \Big[P(u)\frac{\partial H_1}{\partial P}(Q(u),P(u),S,u)-H_1(Q(u),P(u),S,u) \Big ]\circ dW(u).
 \end{aligned}
 \nonumber
\end{equation}

We will show that the  numerical scheme $(\ref{4.7})$ possesses the mean-square convergence order 1.0 according to Lemma 5.3(Generalized Milstein Theorem) by estimating $\lvert \mathbb{E} R\lvert$ and $\|R\|:=[\mathbb{E} \lvert R\lvert^2]^{\frac{1}{2}}$.

First,
\begin{equation}\label{5.6}
\begin{split}
\lvert \mathbb{E} R\lvert \leq &\mathbb{E} \Bigg \lvert
\begin{array}{c}
D_{11} \\
D_{21}\\
D_{31}\\
\end{array} \Bigg\lvert
+\mathbb{E} \Bigg\lvert
\begin{array}{c}
D_{12} \\
D_{22}\\
D_{32}
\end{array} \Bigg\lvert.
 \end{split}
\end{equation}

Consider
\begin{equation}
\mathbb{E} \Big \lvert D_{11} \Big \lvert \leq I_{11},\nonumber
\end{equation}
where
$$I_{11}= \mathbb{E} \int_{t}^{t+h} \Big \lvert\frac{\partial H_0}{\partial P}(\bar{Q},\bar{P},\bar{S},t)-\frac{\partial H_0}{\partial P}(Q(u),P(u),S,u)\Big \lvert du.$$
Then it follows from Lemma 5.1 and Lemma 5.2 that
\begin{equation}\label{5.7}
\begin{split}
I_{11}&\leq K_2 \mathbb{E}\int_{t}^{t+h}\Big [\lvert\bar{Q}-Q(u) \rvert+\lvert\bar{P}-P(u) \rvert+\lvert\bar{S}-S \rvert\Big ] du \\
 &=  K_2 \int_{t}^{t+h}O(h) du =O(h^2).
\end{split}
\end{equation}
Similarly, we have
\begin{equation}
\mathbb{E} \Big \lvert D_{12} \Big \lvert \leq I_{12},\nonumber
\end{equation}
where
$$I_{12}= \mathbb{E} \int_{t}^{t+h} \Big \lvert\frac{\partial H_1}{\partial P}(\bar{Q},\bar{P},\bar{S},t)-\frac{\partial H_1}{\partial P}(Q(u),P(u),S,u)\Big \lvert \circ dW(u).$$
Then by Lemma 5.1 and Lemma 5.2 we have
\begin{equation}\label{5.8}
\begin{split}
I_{12}&\leq K_2 \mathbb{E}\int_{t}^{t+h}\Big [\lvert\bar{Q}-Q(u) \rvert+\lvert\bar{P}-P(u) \rvert+\lvert\bar{S}-S \rvert\Big]\circ dW(u) \\
 &=K_2 \int_{t}^{t+h}O(h) du =O(h^2).
\end{split}
\end{equation}
By the same way, we can get some similar results as follows,
\begin{equation}\label{5.9}
\mathbb{E} \Big \lvert D_{21} \Big \lvert= O(h^2),\mathbb{E} \Big \lvert D_{22} \Big \lvert= O(h^2),\mathbb{E} \Big \lvert D_{31} \Big \lvert= O(h^2),
\end{equation}
and
$$\mathbb{E} \Big \lvert D_{32} \Big \lvert= O(h^2).$$
Therefore, combining $(\ref{5.7})$-$(\ref{5.9})$ to $(\ref{5.6})$,  we have we can have
\begin{equation}\label{5.10}
\lvert\mathbb{E}   R  \lvert= O(h^2).
\end{equation}

Second, according to the definition of the norm,  we have
\begin{equation}\label{5.11}
\begin{split}
\|R\|^2:=\mathbb{E} \lvert R\lvert^2\leq
2\mathbb{E} \Bigg\lvert
\begin{array}{c}
D_{11} \\
D_{21}\\
D_{31}\\
\end{array} \Bigg\lvert^2
+2\mathbb{E} \Bigg\lvert
\begin{array}{c}
D_{12} \\
D_{22}\\
D_{32}
\end{array} \Bigg\lvert^2.
 \end{split}
\end{equation}

We also consider
\begin{equation}\label{5.12}
\mathbb{E} \Big \lvert D_{11} \Big \lvert^2 \leq I^*_{11},\nonumber
\end{equation}
where
$$I^*_{11}=\mathbb{E}\Big[\int_{t}^{t+h} \Big \lvert\frac{\partial H_0}{\partial P}(\bar{Q},\bar{P},\bar{S},t)-\frac{\partial H_0}{\partial P}(Q(u),P(u),S,u)\Big \lvert du\Big]^2.$$

Then it follows from Lemma 5.1, Lemma 5.2  and Cauchy-Schwarz inequality that
\begin{equation}\label{5.13}
\begin{split}
I^*_{11}&\leq K_1 \mathbb{E}\int_{t}^{t+h}\Big [\lvert\bar{Q}-Q(u) \rvert+\lvert\bar{P}-P(u) \rvert+\lvert\bar{S}-S \rvert\Big ] ^2du \\
 &=K_1 \mathbb{E}\int_{t}^{t+h}O(h^2) du =O(h^3).
\end{split}
\end{equation}
Similarly, we have
\begin{equation}
\mathbb{E} \Big \lvert D_{12} \Big \lvert^2 \leq I^*_{12},\nonumber
\end{equation}
where
$$I^*_{12}=\mathbb{E} \Big[\int_{t}^{t+h} \Big \lvert\frac{\partial H_1}{\partial P}(\bar{Q},\bar{P},\bar{S},t)-\frac{\partial H_1}{\partial P}(Q(u),P(u),S,u)\Big \lvert \circ dW(u)\Big]^2.$$
By Cauchy-Schwarz inequality, Lemma 5.1, Lemma 5.2, the relation of It\^{o} and Stratonovich integrals and It\^{o} isometry theorem, then we have
\begin{equation}\label{5.14}
\begin{split}
I^*_{12}\leq &2\mathbb{E}\int_{t}^{t+h} \Big \lvert\frac{\partial H_1}{\partial P}(\bar{Q},\bar{P},\bar{S},t)-\frac{\partial H_1}{\partial P}(Q(u),P(u),S,u)\Big \lvert^2du\\
       &+\frac{1}{2}h \mathbb{E}\int_{t}^{t+h}\bigg[\frac{\partial }{\partial t}\Big \lvert\frac{\partial H_1}{\partial P}(\bar{Q},\bar{P},\bar{S},t)-\frac{\partial H_1}{\partial P}(Q(u),P(u),S,u)\Big\lvert\\
       &\cdot \Big \lvert\frac{\partial H_1}{\partial P}(\bar{Q},\bar{P},\bar{S},t)-\frac{\partial H_1}{\partial P}(Q(u),P(u),S,u)\Big \lvert\Bigg]^2 du \\
        \leq& 2 K_2 \mathbb{E}\int_{t}^{t+h}\Big [\lvert\bar{Q}-Q(u) \rvert+\lvert\bar{P}-P(u) \rvert+\lvert\bar{S}-S \rvert\Big]^2du\\
         &+\frac{1}{2}K_2h\mathbb{E}\int_{t}^{t+h}\bigg[\frac{\partial }{\partial t}\Big \lvert\frac{\partial H_1}{\partial P}(\bar{Q},\bar{P},\bar{S},t)-\frac{\partial H_1}{\partial P}(Q(u),P(u),S,u)\Big\lvert^2\\
         & \cdot \Big [\lvert\bar{Q}-Q(u) \rvert+\lvert\bar{P}-P(u) \rvert+\lvert\bar{S}-S \rvert\Big]^2 du \\
 \leq& 2 K_2 \mathbb{E}\int_{t}^{t+h}O(h^2)du+  \frac{1}{2}K_2 h\mathbb{E}\int_{t}^{t+h}O(h^2) du =O(h^3).
\end{split}
\end{equation}
By the same way, we can get some similar results as follows,
\begin{equation}\label{5.15}
\mathbb{E} \Big \lvert D_{21} \Big \lvert^2= O(h^2),\mathbb{E} \Big \lvert D_{22} \Big \lvert^2= O(h^2),\mathbb{E} \Big \lvert D_{31} \Big \lvert^2= O(h^3),
\end{equation}
and
$$\mathbb{E} \Big \lvert D_{32} \Big \lvert^2= O(h^3).$$
Therefore, combining $(\ref{5.12})$-$(\ref{5.15})$ to $(\ref{5.11})$, we can have
\begin{equation}
\mathbb{E} \lvert  R  \lvert^2= O(h^3).\nonumber
\end{equation}
That is,
\begin{equation}\label{5.16}
\|R\|=\Big[\mathbb{E}\lvert  R\lvert^2\Big]^{\frac{1}{2}} =O(h^{\frac{3}{2}}).
\end{equation}

Thus, by $(\ref{5.10})$ and $(\ref{5.16})$, we have $\xi_1=2$ and $\xi_2=\frac{3}{2}$. According to Lemma 5.3, the numerical scheme $(\ref{4.7})$  is of mean-square convergence order $1.0$.

This completes the proof of Theorem 5.4.

\end{proof}

\section{Numerical Experiments}

We apply Theorem 3.1 and the method in Section 4 to
construct contact structure-preserving schemes via stochastic contact Hamilton-Jacobi equation for stochastic contact Hamiltonian systems. The schemes present the validity of stochastic contact Hamilton-Jacobi equation and the effectiveness of the numerical experiments.

We consider the following stochastic damped parametric oscillator, which has mass m and time-dependent frequency $\omega(t)$, and whose variables are one-dimensional.
The related stochastic contact Hamilton systems is in the form of,
  \begin{equation}\label{6.0}
\left \{
\begin{aligned}
 dQ&=\frac{P}{m}dt\\
 dP&=-\Big[m\omega^2(t)Q+\gamma P\Big ]dt-\psi'(Q)\circ dW(t),\\
 dS&=\Big[\frac{1}{2m}P^2-\frac{1}{2}m\omega^2(t)Q^2-\gamma S \Big ] dt-\psi(Q)\circ dW(t),
 \end{aligned}
\right.
\end{equation}
where the variables $Q,P,S\in \mathbb{R}$ , $m,\omega$ and $\gamma$ are parameters, $\psi(Q):=a Q$, and
$$H_0=\frac{1}{2m}P^2+\frac{1}{2}m\omega^2(t)Q^2+\gamma S, H_1=\psi(Q).$$
Its contact Hamiltonian is
\begin{equation}\label{6.1}
\begin{aligned}
\mathcal{H}=\frac{1}{2m}P^2+\frac{1}{2}m\omega^2(t)Q^2+\gamma S.
 \end{aligned}
\nonumber
 \end{equation}
And the stochastic contact Hamilton-Jacobi equation  driven by  Gaussian noise is
\begin{equation}\label{6.2}
\begin{aligned}
dS+\Big[\frac{1}{2m}P^2+\frac{1}{2}m\omega^2(t)Q^2+\gamma S\Big]dt+\psi(Q)\circ dW_t =0.
 \end{aligned}
 \end{equation}

\subsection{Construction of contact structure-preserving  schemes}

It is easy to check that $H_0$, $H_1$ and their partial derivatives satisfy Lemma 5.1. Therefore, utilizing the method in Section 4, we choose the ansatz of $S(Q,C,t)$ with the order $1.0$,  then we have
 \begin{equation}\label{6.3}
\begin{aligned}
\bar{S}(\bar{Q},C,t)=&G_{(1)}(\bar{Q})W(t)+G_{(1,1)}(\bar{Q})\int_0^tW(t)\circ dW(t)+G_{(0)}(\bar{Q})t,\\
\end{aligned}
\end{equation}
where
$$G_{(0)}(\bar{Q})t=C^{(0)}_0(t)+C^{(0)}_1(t)\bar{Q}+C_2^{(0)}(t)\bar{Q}^2.$$

Differentiating two sides of $\bar{S}(\bar{Q},C,t)$ with respect to $t$, then we obtain
$$\frac{\partial \bar{S}}{\partial t}=A_1+B_1\dot{W},$$
 where
\begin{equation}
  \left \{
\begin{aligned}
A_1=&\frac{d C_0^{(0)}(t)}{dt}+\frac{d C_1^{(0)}(t)}{dt}\bar{Q}+\frac{d C_2^{(0)}(t)}{dt}\bar{Q}^2 ,\\
B_1=&G_{(1)}(\bar{Q})+G_{(1,1)}(\bar{Q})\cdot W(t).\nonumber
\end{aligned}
\right.
\end{equation}

Comparing $(\ref{6.3})$  with $(\ref{6.1})$ about the corresponding same terms, we get
 \begin{equation}
 \left \{
\begin{aligned}
A_1=-H_0\\
B_1=-H_1.\nonumber
\end{aligned}
\right.
\end{equation}
That is,
 \begin{equation}
  \left \{
\begin{aligned}
-H_0&=\frac{d C_0^{(0)}(t)}{dt}+\frac{d C_1^{(0)}(t)}{dt}\bar{Q}+\frac{d C_2^{(0)}(t)}{dt}\bar{Q}^2 \\
&=-\frac{1}{2m}\bar{P}^2-\frac{1}{2}m\omega^2(t)\bar{Q}^2-\gamma \bar{S},\\
-H_1&=G_{(1)}(\bar{Q})+G_{(1,1)}(\bar{Q})\cdot W(t)\\
&=-\psi(\bar{Q}).
\end{aligned}
\right.
\nonumber
\end{equation}

By comparing the expressions of the above equations, we have
 \begin{equation}\label{6.4}
  \left \{
\begin{aligned}
&\frac{d C_0^{(0)}(t)}{dt}+\frac{d C_1^{(0)}(t)}{dt}\bar{Q}+\frac{d C_2^{(0)}(t)}{dt}\bar{Q}^2 =-\frac{1}{2m}\bar{P}^2-\frac{1}{2}m\omega^2(t)\bar{Q}^2-\gamma \bar{S},\\
&G_{(1)}(\bar{Q})=-\psi(\bar{Q})=-\bar{Q},\\
&G_{(1,1)}(\bar{Q})=0.
\end{aligned}
\right.
\end{equation}
According to $(\ref{6.3})$ and $(\ref{6.4})$, we  obtain
 \begin{equation}\label{6.5}
\begin{aligned}
\bar{S}(\bar{Q},C,t)=&-\psi(\bar{Q})W(t)+G_{(0)}(\bar{Q})t\\
                    =& -a\bar{Q}W(t)+C^{(0)}_0(t)+C^{(0)}_1(t)\bar{Q}+C_2^{(0)}(t) \bar{Q}^2.
\end{aligned}
\end{equation}
It follows from $(\ref{6.5})$ and the last two equations of $(\ref{6.4})$  we have
 \begin{equation}\label{6.6}
\begin{aligned}
\bar{P}(t)=&\frac{\partial \bar{S}(\bar{Q},C,t)}{\partial \bar{Q}}\\
=&-aW(t)+C^{(0)}_1(t)+2C_2^{(0)}(t)\bar{Q}.\\
\end{aligned}
\end{equation}
Then $(\ref{6.5})$ and $(\ref{6.6})$ are inserted into the first equation of $(\ref{6.4})$, after the  order in $\bar{Q}$ is compared, we obtain the following conditions,
 \begin{equation}\label{6.7}
  \left \{
\begin{aligned}
\frac{d C_2^{(0)}(t)}{dt}=&-\frac{2}{m}\Big[C_2^{(0)}(t)\Big]^2-\gamma C_2^{(0)}(t)-\frac{1}{2}mw^2(t)\\
\frac{d C_1^{(0)}(t)}{dt}=&\Big[ -\frac{2}{m}C_2^{(0)}(t)-\gamma \Big]\cdot \Big[C_1^{(0)}(t)-a W(t)\Big]\\
\frac{d C_0^{(0)}(t)}{dt}=&-\frac{1}{2m}\Big[(C_1^{(0)}(t))^2+(aW(t))^2-2aW(t)C_1^{(0)}(t)\Big]\\
                          &-\gamma C_0^{(0)}(t),\\
\end{aligned}
\right.
\end{equation}
According to the third equation in $(\ref{3.3})$, we have $$b(t)=b_0\exp(-\gamma t).$$
Based on $(\ref{6.5})$ and the second equation in $(\ref{3.3})$, we have
$$b(t)=\frac{\partial \bar{S}}{\partial C}=\frac{\partial C_0^{(0)}(t)}{\partial C}+\frac{\partial C_1^{(0)}(t)}{\partial C}\bar{Q}+\frac{\partial C_2^{(0)}(t)}{\partial C}\bar{Q}^2=b_0\exp(-\gamma t).$$

Therefore, we can obtain the numerical approximation of $\bar{Q}(t)$
 \begin{equation}\label{6.8}
\begin{aligned}
\bar{Q}(t)= \frac{-\chi+\sqrt{\chi^2-4\kappa \eta}}{2\kappa},\\
\end{aligned}
\end{equation}
where $$\kappa=\frac{\partial C_2^{(0)}(t)}{\partial C}, \chi=\frac{\partial C_1^{(0)}(t)}{\partial C},\eta=\frac{\partial C_0^{(0)}(t)}{\partial C}-b_0\exp(-\gamma t),$$
and $$b_0=b(0)=1+\bar{Q}_0+\bar{Q}_0^2.$$
Then applying  the numerical methods to  $(\ref{6.7})$, we obtain the numerical approximation of  $C_0^{(0)}(t)$, $C_1^{(0)}(t)$ and  $ C_2^{(0)}(t)$. So that we can get the numerical solutions $(\bar{Q}(t), \bar{P}(t),\bar{S})$ of $(\ref{6.0})$ by  $(\ref{6.8})$,  $(\ref{6.6})$ and $(\ref{6.5})$, respectively.

\subsection{Numerical implementations of contact structure-preserving schemes}  

Here we choose the initial conditions  as follows, $m=1,\gamma=1,\psi(Q)=Q$. We only consider  the damped parametric oscillator in the case of free particle with $\omega(t)=0$.
Then  $(\ref{6.0})$ reads
  \begin{equation}\label{6.9}
\left \{
\begin{aligned}
 dQ&=P dt,\\
 dP&=-P dt- dW(t),\\
 dS&=(\frac{1}{2}P^2- S)dt-Q\circ dW(t).
 \end{aligned}
\right.
\end{equation}
 We replace the time step t, $(q,p,s)$, and $(Q(t),P(t),S(t))$ by $h$, $(q_n,p_n,s_n)$ and $(q_{n+1},p_{n+1},s_{n+1})$, respectively. Then $(C, C_0^{(0)}(t))$, $(C, C_1^{(0)}(t))$ and  $(C, C_2^{(0)}(t))$ are rewritten by $(x_n,x_{n+1})$, $(y_n,y_{n+1})$ and $(z_n,z_{n+1})$, respectively. From $(\ref{6.7})$ we obtain
 \begin{equation}\label{6.10}
  \left \{
\begin{aligned}
z_{n+1}=&z_n-2z_{n}^2h-z_{n}h,\\
y_{n+1}=&y_n-2z_{n}(y_{n}-\Delta W_n)h-(y_{n}-\Delta W_n)h, \\
x_{n+1}=&x_n-\frac{1}{2}h\Big[(y_{n})^2+( \Delta W_n)^2-2 \Delta W_ny_{n}\Big]-x_{n}h.
\end{aligned}
\right.
\end{equation}
Then we have
 \begin{equation}\label{6.11}
  \left \{
\begin{aligned}
\frac{\partial z_{n+1}}{\partial z_n}=&1-h-4hz_n,\\
\frac{\partial y_{n+1}}{\partial y_n}=&1-h-2hz_{n}, \\
\frac{\partial x_{n+1}}{\partial x_n}=&1-h.
\end{aligned}
\right.
\end{equation}

Therefore, using $(\ref{6.11})$, $(\ref{6.8})$,  $(\ref{6.6})$ and $(\ref{6.5})$ we get the contact structure-preserving schemes for the equations $(\ref{6.9})$, which are in the form of
\begin{equation}\label{6.12}
\left \{
\begin{aligned}
 q_{n+1}=&\frac{-(1-h-2hz_n)+\sqrt{12h(1-h)z_n+4h^2z_n^2-3(1-h)^2}}{2(1-h-4hz_n)},\\
 p_{n+1}=&y_{n+1}+2z_{n+1}q_{n+1}-\Delta W_n,\\
 s_{n+1}=&x_{n+1}+y_{n+1}q_{n+1}+z_{n+1}q_{n+1}^2-q_{n+1}\Delta W_n.\\
\end{aligned}
\right.
\end{equation}
The Euler-Maruyama scheme for $(\ref{6.9})$ is
   \begin{equation}\label{6.13}
\left \{
\begin{aligned}
 q_{n+1}&=q_n+p_nh,\\
 p_{n+1}&=p_n-p_n h-\Delta W_n,\\
 s_{n+1}&=s_n+ \Big[\frac{1}{2}p_n^2-s_n\Big] h-\frac{1}{2}q_{n}h-q_n\Delta W_n.
 \end{aligned}
\right.
\end{equation}
The one-step contact structure-preserving scheme for $(\ref{6.9})$ by the method in \cite{Zhan4}, i.e., stochastic Herglotz variational principle, is in the form of
  \begin{equation}\label{6.14}
\left \{
\begin{aligned}
 q_{n+1}&=q_{n}+(1-\frac{1}{2}h)hp_{n}-h\Delta W_{n},\\
 p_{n+1}&=p_{n}-\frac{1}{1-\frac{h}{2}}\Delta W_{n},\\
 s_{n+1}&= s_n+\frac{1}{2h}(q_{n+1}-q_n)^2-\frac{1}{2}h(s_{n+1}+s_n)-q_n\Delta W_n.
 \end{aligned}
\right.
\end{equation}
where $h=\frac{T}{N},$ $\Delta W_n=W(t_{n+1})-W(t_n)$.

\subsection{Numerical results}
Dynamical behaviors of the solutions obtained by the schemes $(\ref{6.12})$, $(\ref{6.13})$  and $(\ref{6.14})$ can be compared in this numerical experiment.
For the sake of improving the comparability of the results, we choose the initial conditions as follows,  $T=20.0$, $h=0.1$,  $\varepsilon=0.02$, $\alpha=0.1$, $N=200.0$, $(q(0),p(0),s(0))=(0.75,-0.25,0.08)$ and $(x(0),y(0),z(0))=(0.65,0.65,0.65)$.

 {\bf 6.3.1 Comparison of sample trajectories with non-contact scheme $(\ref{6.13})$ }

  \begin{figure}[H]
   \centering
    \begin{minipage}{6.5cm}
       \includegraphics[width=2.8in, height=1.8in]{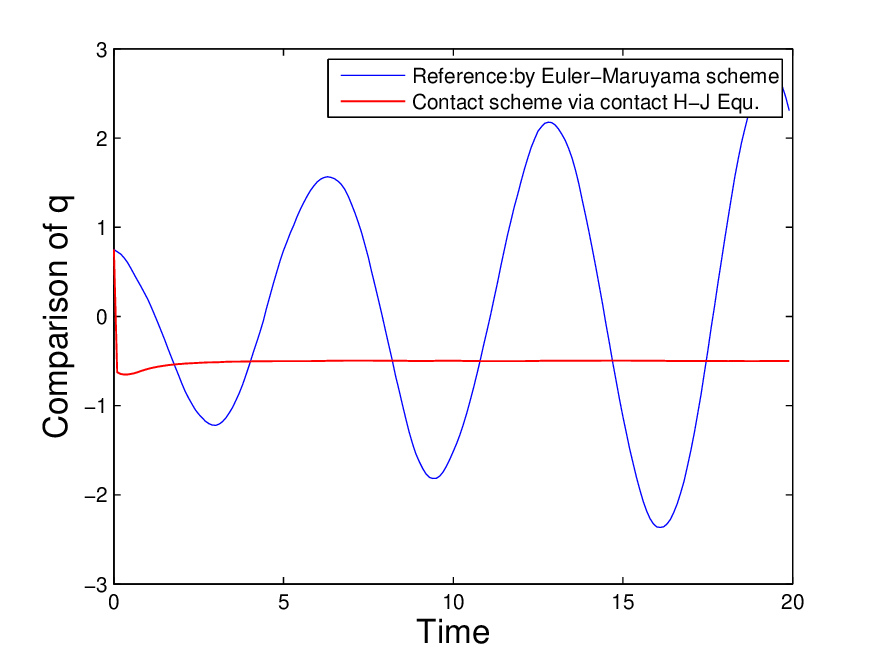}
    \end{minipage}
   \\
   \begin{minipage}{6.5cm}
       \includegraphics[width=2.8in, height=1.8in]{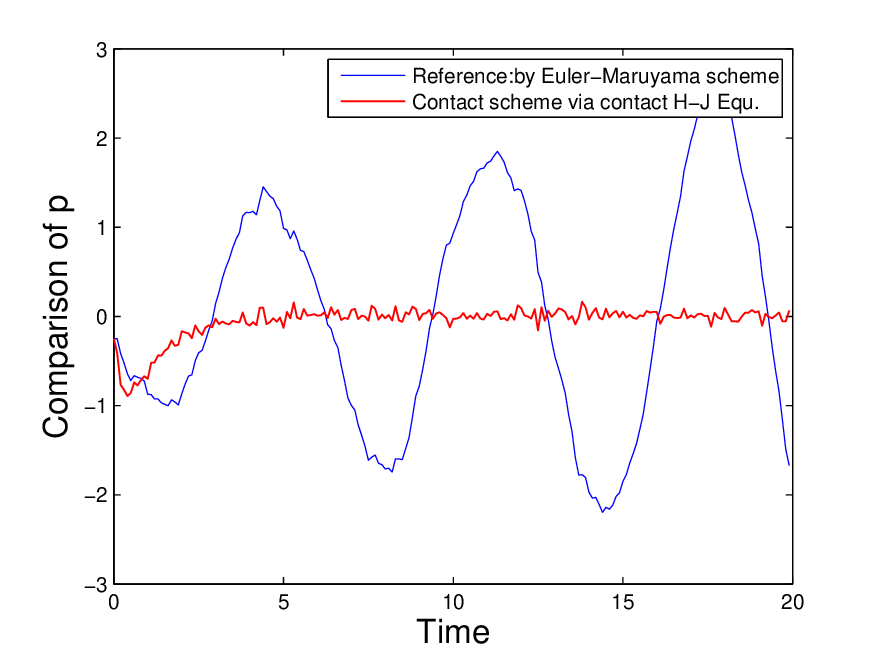}
    \end{minipage}
    \begin{minipage}{6.5cm}
       \includegraphics[width=2.8in, height=1.8in]{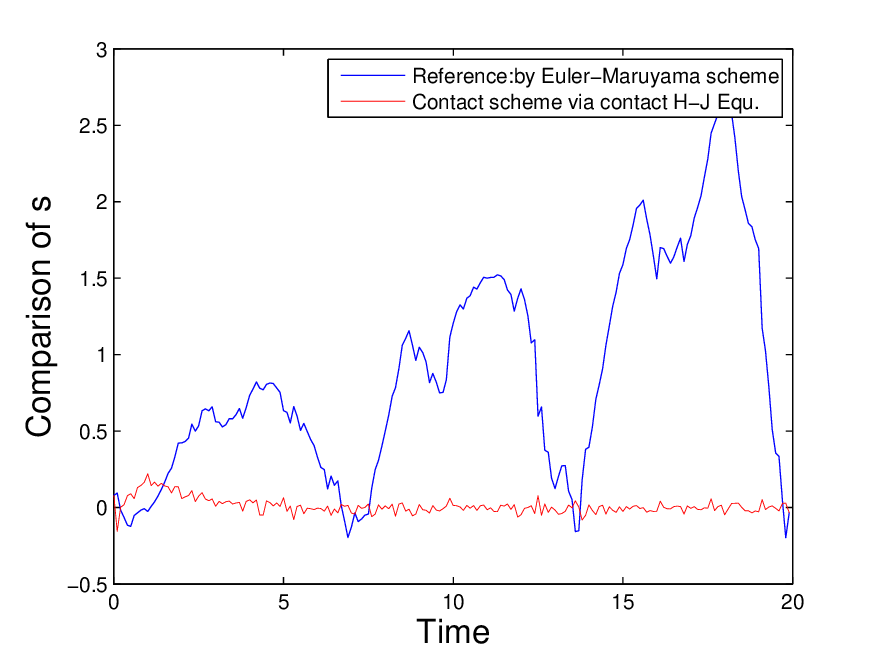}
    \end{minipage}
    \par{\scriptsize  Fig.1. Comparison between the trajectories of the equation $(\ref{6.9})$ by the schemes $(\ref{6.12})$  and $(\ref{6.13})$ in the coordinates $q,p$ and $s$, respectively.}
\end{figure}

 In order to compare our proposed scheme $(\ref{6.12})$ with the non-contact scheme $(\ref{6.13})$, we can compare long time behaviors of the numerical solutions obtained by the schemes $(\ref{6.12})$ and $(\ref{6.13})$. As we can see from Fig.1, the former shows better performance than the latter. That is to say, the non-contact scheme $(\ref{6.13})$ leads to   larger oscillations than the contact scheme $(\ref{6.12})$.

  {\bf 6.3.2 Comparison of sample trajectories with the contact scheme $(\ref{6.13})$}

   Here we aim to compare the scheme $(\ref{6.12})$ with the contact scheme $(\ref{6.14})$,which is constructed by Herglogz variational principle. It shows from Fig.2, they have similar behaviors, and the difference in the  trajectories in Fig.2 is less than this in Fig.1. That is to say, the scheme $(\ref{6.12})$  is also contact scheme.
   \begin{figure}[H]
   \centering
    \begin{minipage}{6.5cm}
       \includegraphics[width=2.8in, height=1.8in]{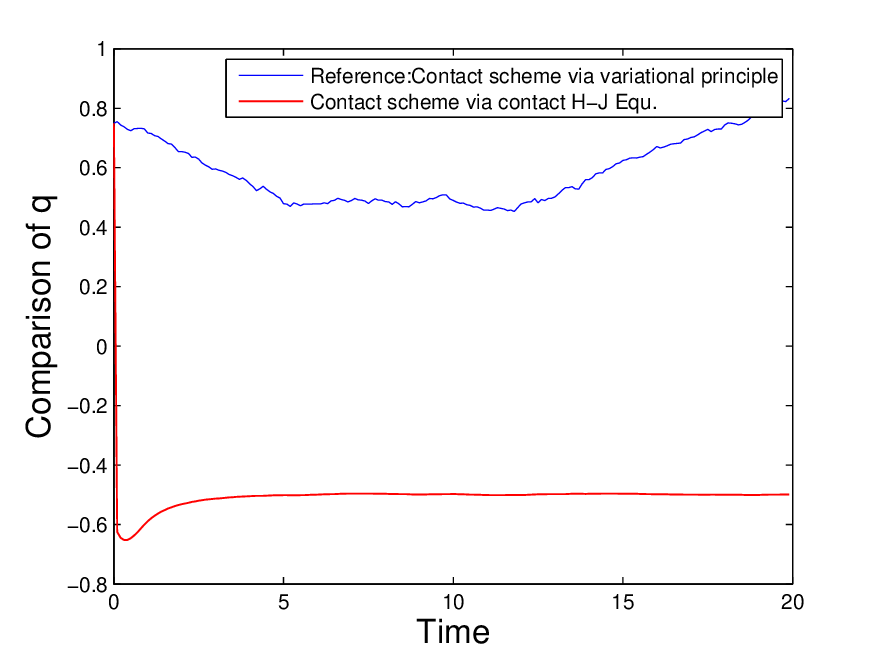}
    \end{minipage}
   \\
   \begin{minipage}{6.5cm}
       \includegraphics[width=2.8in, height=1.8in]{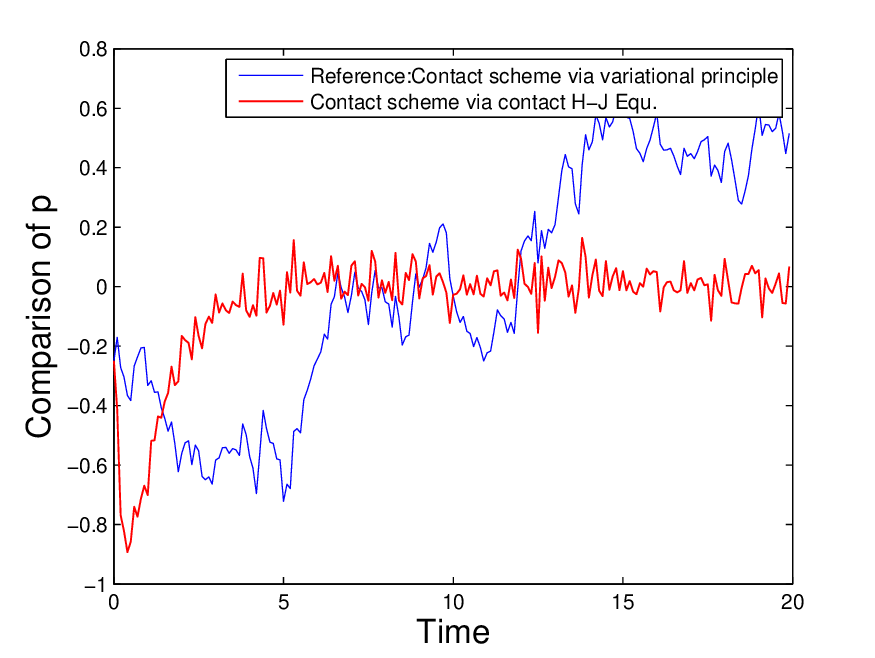}
    \end{minipage}
    \begin{minipage}{6.5cm}
       \includegraphics[width=2.8in, height=1.8in]{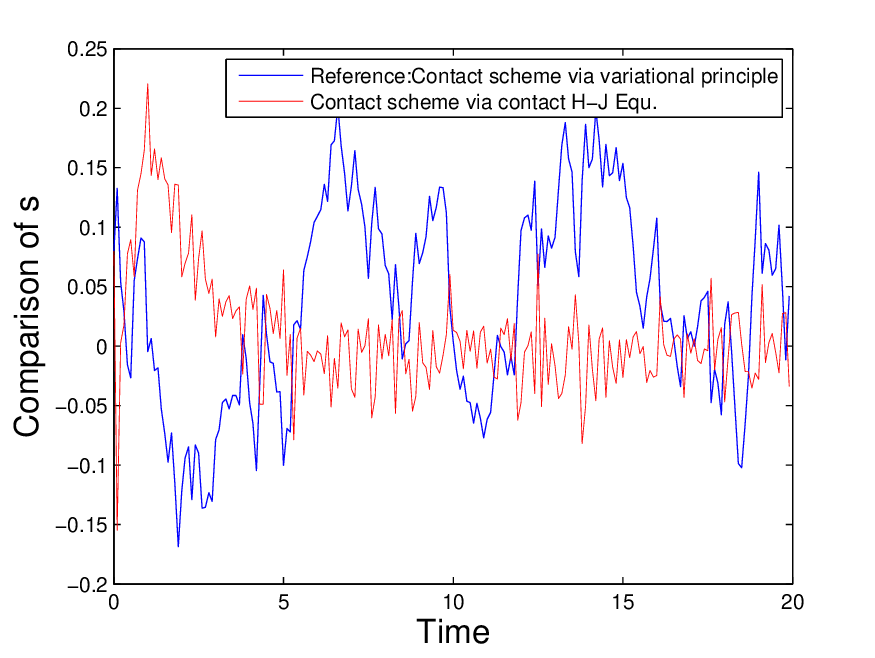}
    \end{minipage}
    \par{\scriptsize  Fig.2. Comparison between the trajectories of the equation $(\ref{6.9})$ by the schemes $(\ref{6.12})$  and $(\ref{6.14})$ in the coordinates $q,p$ and $s$, respectively.}
\end{figure}

{\bf 6.3.3 Preservation of contact structure of SDE $(\ref{6.9})$ }

    \begin{figure}[H]
   \centering
   \begin{minipage}{6.5cm}
       \includegraphics[width=2.8in, height=1.8in]{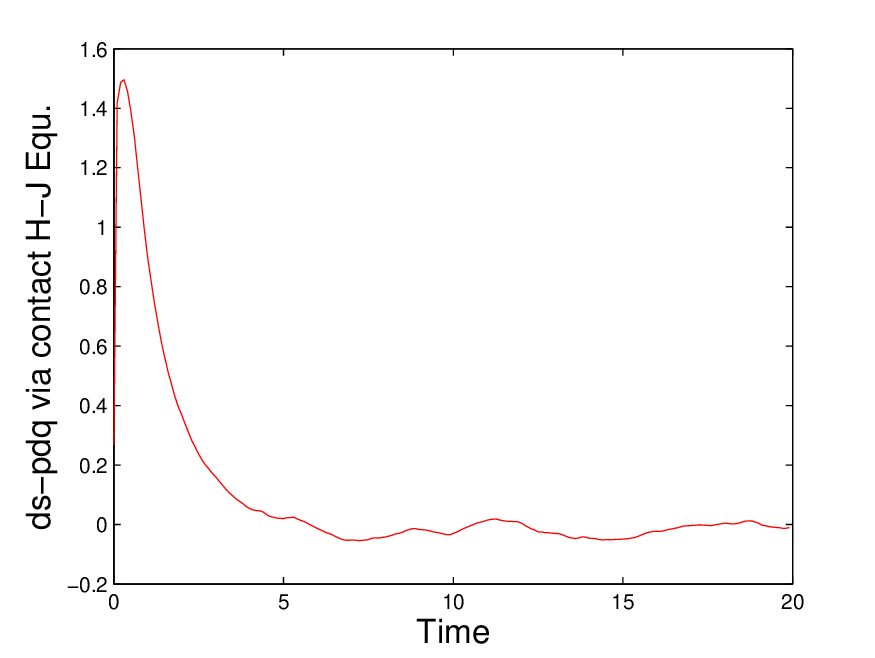}
    \end{minipage}
    \begin{minipage}{6.5cm}
       \includegraphics[width=2.8in, height=1.8in]{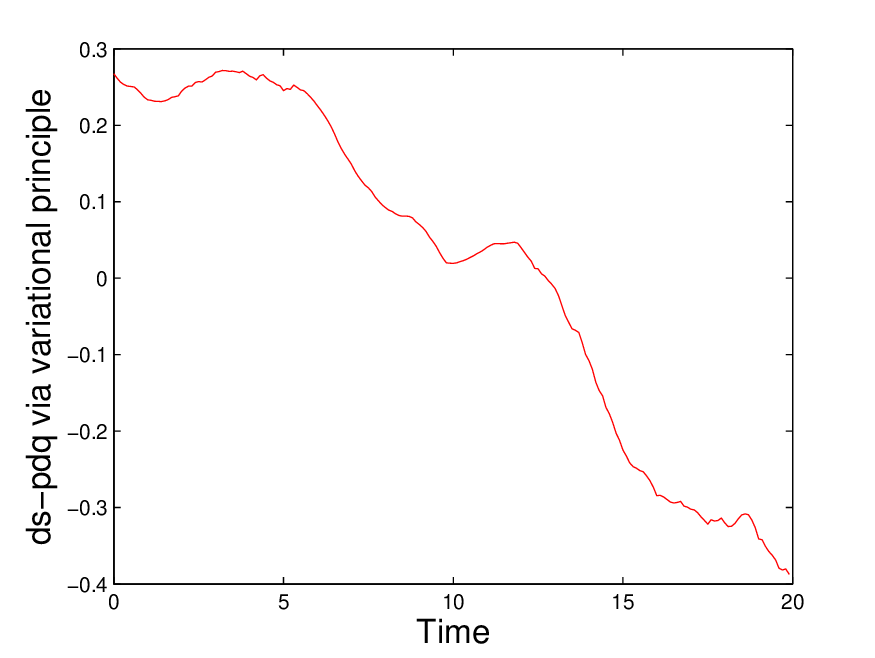}
    \end{minipage}
    \par{\scriptsize  Fig.3. Comparison the preservation of contact structure between the trajectories of the equation $(\ref{6.9})$ by the schemes $(\ref{6.12})$  and $(\ref{6.14})$, respectively.}
\end{figure}

    \begin{figure}[H]
   \centering
   \begin{minipage}{6.5cm}
       \includegraphics[width=2.8in, height=1.8in]{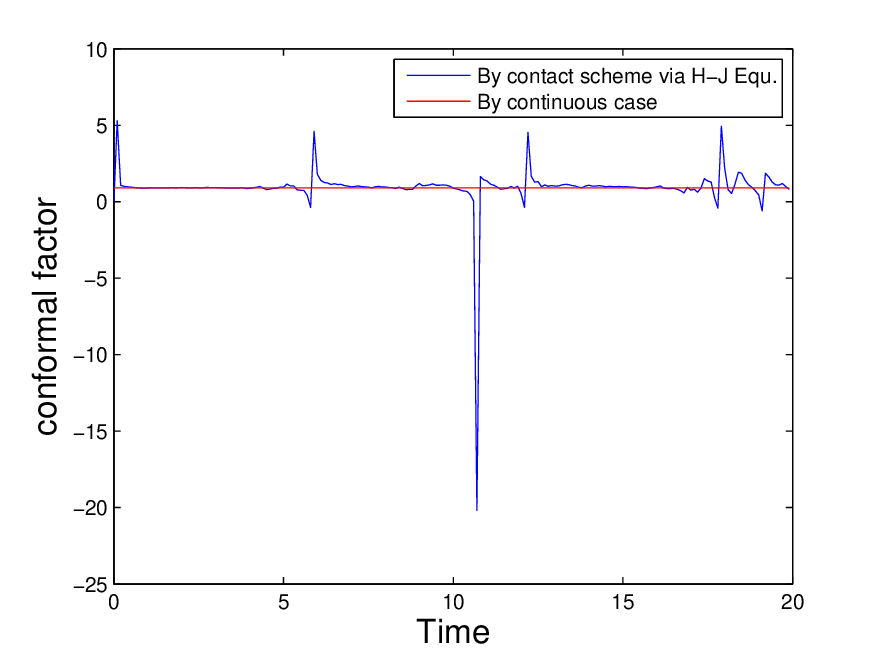}
    \end{minipage}
    \begin{minipage}{6.5cm}
       \includegraphics[width=2.8in, height=1.8in]{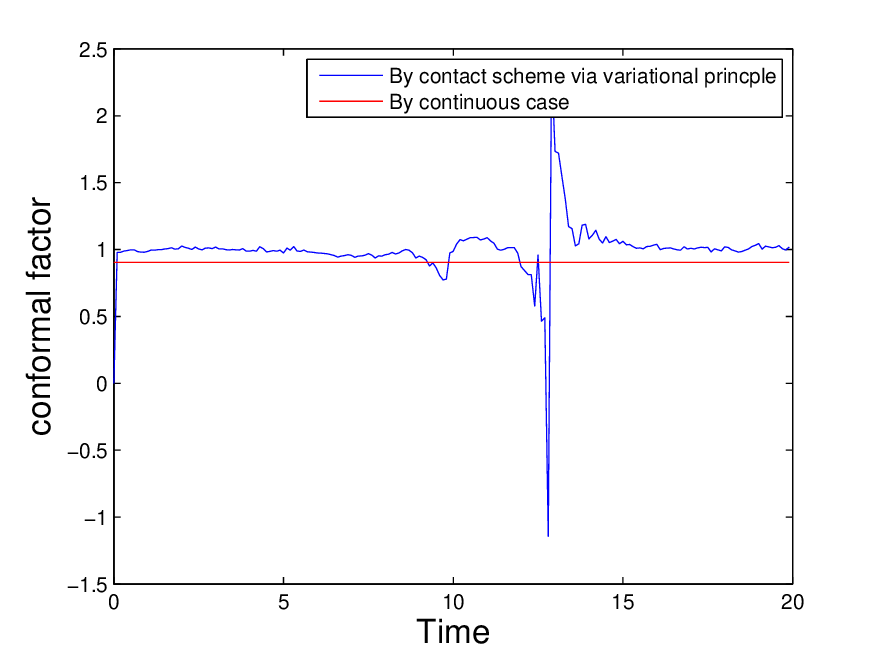}
    \end{minipage}
    \par{\scriptsize  Fig.4. Comparison of the conformal factor of  the continuous case and the contact schemes $(\ref{6.12})$  and $(\ref{6.14})$, respectively.}
\end{figure}

Fig.3 shows that the contact structure of $(\ref{6.9})$ is almost surely preserved in the time interval $[0,20]$ by the scheme $(\ref{6.12})$. Fig.3 also shows that  the trajectory of contact structure in the time interval $[0,20]$ in  the left halt-plot of Fig.3 has better performance than those in the right-plot. That is to say, it is flat almost surely in the time interval $[0,20]$. Fig.4 also shows the fact that the conformal factor is preserved well by the schemes $(\ref{6.12})$  and $(\ref{6.14})$.

 Therefore, these phenomena verify the results of Theorem 3.1. It indicates the fact that the  scheme $(\ref{6.12})$ is efficient.

  {\bf 6.3.4 Convergence of the contact scheme $(\ref{6.12})$}

  Fig. 5 shows the value $\|(Q(T)+P(T)+S(T))^2-(Q_N+P_N+S_N)^2\|$ against the time stepsize $h=0.02,0.04,0.06,0.08$ and $0.10$ with $T=120.0$, where the initial values are similar to Section 6.3.1. It also shows that the mean-square order of the scheme $(\ref{6.12})$ is 1.0, as indicated by the
reference line of slope 1.0.

\begin{figure}[H]
\centering
\includegraphics[width=3.5in, height=1.8in]{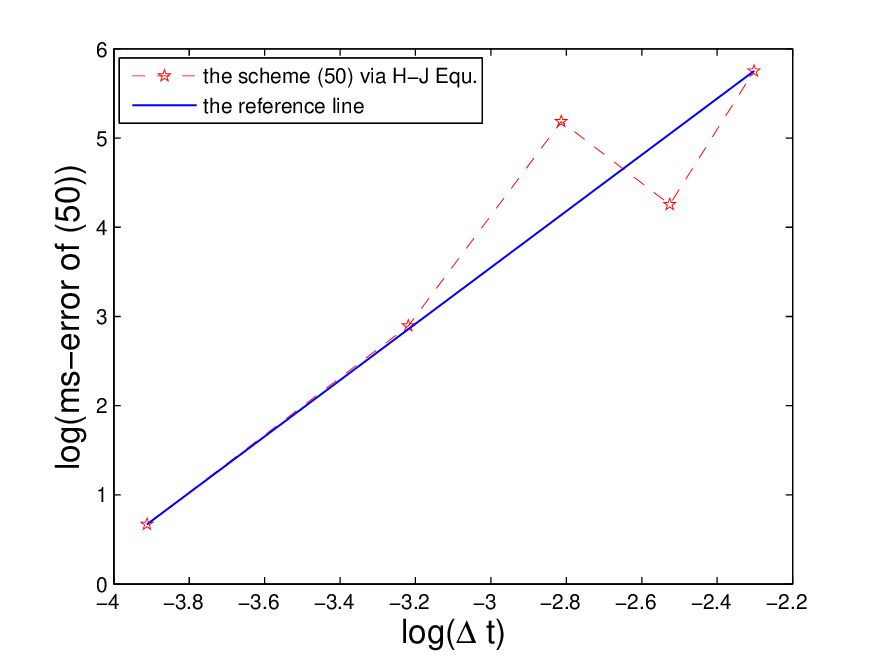}
\par{\scriptsize  Fig.5. The mean-square convergence order of the scheme $(\ref{6.12})$  and the time stepsize $h=0.02,0.04,0.06,0.08,0.10$.}
\end{figure}

\begin{table}[h]
 \centering
\begin{tabular}{ccccccccccccc}
\hline
$h$      &log(ms-error of (50))                      \\
\hline
0.02            &0.6684                          \\

0.04            &2.8935                                  \\

0.06            &5.1845                             \\

0.08            &4.2541                                         \\

0.10            &5.7526                                  \\
\hline
\end{tabular}
\label{objectiveResults}
\caption{The log (mean-square error of the scheme $(\ref{6.12})$ ) with the time stepsize $h=0.02,0.04,0.06,0.08,0.10$.}
\end{table}

\section{Conclusion}
 This paper focuses on the numerical dynamics of the stochastic contact Hamiltonian systems via structure-preserving methods. The contact structure-preserving schemes are derived from the stochastic contact Hamilton-Jacobi equation. These numerical tests are conducted to show the effectiveness of the created method by means of the simulations of its orbits on a long time interval and comparison with  other two methods. The results demonstrate  that the method is valid and the numerical experiments are performed well. How to construct  various numerical methods with higher order via the stochastic contact Hamilton-Jacobi equation would be our next topic.

\section*{Statements}
All data in this manuscript is available. And all programs will be available on the WEB GitHub.

\section*{Acknowledgments}
This work is supported by NSFC(12141107). It is also supported by the Fund of Fujian Agriculture and Forestry University(111422138). Prof. X. Li was partially supported by the grant DOE DE-SC00222766.

\section*{References}
\begin{enumerate}
\bibitem{Arnold}
V. I. Arnold, Mathematical Methods of Classical Mechanics, Springer Science and
Business Media, Vol. 60, 2013.

\bibitem{Bravetti}
A. Bravetti, H. Cruz, D. Tapias, Contact Hamiltonian mechanics, Ann.
Phys., 376(2017)17-39.

\bibitem{Bravetti02}
A. Bravetti, M. Seri, M. Vermeeren,F. Zadra, Numerical integration in Celestial Mechanics: a case for contact geometry, Celestial Mechanics and Dynamical Astronomy,132(7)(2020)1-29.

\bibitem{Duan}
J. Duan, An Introduction to Stochastic Dynamics, Cambridge University Press, 2015.

\bibitem{Deng}
J. Deng, C. Anton, Y. Wong, High-order symplectic schemes for stochastic Hamiltonian systems, Commum. Comput.Phys., 16(1),169-200,2014.

\bibitem{Feng}
K. Feng, Contact algorithms for contact  dynamical systems, J. Computational Mathematics, 16(1)(1998),1-14.

\bibitem{Geiges}
H. Geiges, An Introduction to Contact Topology, Cambridge University Press,
Vol. 109, 2008.

\bibitem{Georgieva}
B. Georgieva, R. Guenther T. Bodurov, Generalized variational principle of Herglotz for
several independent variables. First Noether-type theorem, J. Math. Physics, 44(2003) 3911-3927.

\bibitem{Golub}
G. Golub, C. Van Loan, Matrix Computations, 4th edition, The Johns Hopkins University Press, 2013.

\bibitem{Hairer}
E. Hairer, C. Lubich, G. Wanner, Geometric Numerical Integration, Springer-Verlag, 2002.

\bibitem{Leon}
M. de Leon, M. L. Valcazar, Contact Hamiltonian systems, J. Math. Phys.,
60(10)(2019) 102902 .

 \bibitem{Leon2}
M. de Leon, M. Lainz, A. Lopez-Gordon, X. Rivas, Hamilton-Jacobi theory and integrability for autonomous
and non-autonomous contact systems, Journal of Geometry and Physics, 187 (2023) 104787.

\bibitem{Liu}
Q. Liu, P. J. Torres, C. Wang, Contact Hamiltonian dynamics: Variational principles, invariants, completeness and periodic behavior,
 Ann. Phys., 395(2018) 26-44.

\bibitem{Milstein01}
G. Milstein, Numerical Integration of Stochastic Differential Equations, Kluwer Academic Publishers, 1995.

\bibitem{Milstein02}
G. Milstein, Y. Repin, M. Tretyakov, Numerical methods for stochastic systems preserving symplectic structure,
SIAM J. Numer. Anal., 40(4)(2002) 1583-1604.
\bibitem{Milstein03}
G. Milstein, Y. Repin, M. Tretyakov, Symplectic integration of Hamiltonian systems with additive noise,
SIAM J. Numer. Anal., 39(6)(2002) 2066-2088.

\bibitem{Misawa}
T. Misawa, Symplectic integrators to stochastic Hamiltonian dynamical systems derived from
composition methods, Mathematical problems in Engineering, 2010.

\bibitem{Tveter}
F. T. Tveter, Deriving the Hamilton equations of motion for a nonconservative system
using a variational principle, J. Math. Physics, 39(3)(1998)1495-1500.

\bibitem{Wang}
T. Wang, Maximum error bound of a linearized difference scheme for coupled nonlinear Schrodinger equation, J. Comput. Appl. Math.,
235 (2011) 4237-4250.

\bibitem{Hong}
L. Wang, J. Hong, R. Scherer, F. Bai, Dynamics and variational integrators of stochastic Hamiltonian systems,
International J. of numerical analysis and modeling, 6(4)(2009) 586-602.

\bibitem{X. Wang}
X. Wang, J. Duan, X. Li, Y. Luan, Numerical methods for the mean exit time and escape probability of two-dimensional stochastic dynamical systems with non-Gaussian noises, Appl. Math. Comput., 258(2015) 282-295.

\bibitem{Wei}
P. Wei, Z. Wang, Formulation of stochastic contact Hamiltonian systems, Chaos 31 (2021), 041101.

\bibitem{Zhan4}
Q. Zhan, J. Duan, X. Li, Y. Li, Numerical integration of stochastic contact Hamiltonian systems via stochastic Herglotz variational principle, Phys. Scr. 98 (2023) 055211.

\bibitem{Zhan5}
Q. Zhan, J. Duan, X. Li, Y.Li, Numerical integration for Hamiltonian stochastic differential equations with multiplicative L\'{e}vy  noise in the sense of Marcus, Mathematics and Computers in Simulation, 215 (2024) 420-439.

\end{enumerate}
\end{document}